\documentclass[11pt]{amsart}
\usepackage{amsmath,amssymb,amsthm, tikz}
\usepackage{xcolor}

\usepackage{amsmath,amssymb,latexsym,amsthm,enumerate,mathtools,marginfix,tikz}
\usepackage{hyperref}
\usetikzlibrary{decorations.pathmorphing}
\tikzset{snake it/.style={decorate, decoration=snake}}
\usepackage{setspace}

\DeclareUnicodeCharacter{2192}{\to}
\DeclareUnicodeCharacter{3C6}{\phi}
\DeclareUnicodeCharacter{221E}{\infty}
\DeclareUnicodeCharacter{2208}{\in}
\DeclareUnicodeCharacter{2265}{\ge}
\DeclareUnicodeCharacter{3A3}{\Sigma}

\usepackage{tikz, tikz-cd} 
\usepackage{microtype} 
\usepackage[margin=1.5in]{geometry}

\usepackage{amssymb}

\renewcommand{\d}{\partial }

\numberwithin{equation}{section}
\newtheorem{Theorem}{Theorem}[section]

\newtheorem{Prop}[Theorem]{Proposition} 
\newtheorem{Corollary}[Theorem]{Corollary} 
\newtheorem{Lemma}[Theorem]{Lemma}
\newtheorem{Definition}[Theorem]{Definition}
\newtheorem*{Main Theorem}{Main Theorem}
\newtheorem*{conj}{Conjecture}

\theoremstyle{definition}

\newtheorem{A special case}{A special case}[Theorem]
\newtheorem*{Coarse van Kampen obstruction class}{Coarse van Kampen obstruction class}
\newtheorem*{Coarse cohomology of the configuration space}{Coarse cohomology of the configuration space}

\newtheorem*{Cannon Conjecture}{Cannon Conjecture}
\newtheorem*{Generalized Cannon Conjecture}{CAT(0) Cannon Conjecture}

\theoremstyle{remark}
\newtheorem{Remark}[Theorem]{Remark}
\newtheorem{Example}[Theorem]{Example}

\newcommand{\rr}{\mathbb{R}}
\newcommand{\zz}{\mathbb{Z}}
\newcommand{\nn}{\mathbb{N}}

\newcommand{\ee}{\mathbb{E}}

\newcommand{\cS}{\mathcal{S}}
\newcommand{\cC}{\mathcal{C}}

\newcommand{\cD}{\mathcal{D}}
\newcommand{\symdiff}{\mathbin{\triangle}}

\DeclareMathOperator\diam{diam}

\DeclareMathOperator\myc{C}
\DeclareMathOperator\myh{H}
\DeclareMathOperator\Hom{Hom}

\DeclareMathOperator\cat{CAT(0)}
\DeclareMathOperator\Isom{Isom}
\DeclareMathOperator\Stab{Stab}

\newcommand{\comment}[1]{}

\newcommand{\C}[1][*]{\myc^{#1}}

\newcommand{\Cx}[1][*]{\myc{\mathrm X}^{#1}}
\newcommand{\cx}[1][*]{\myc{\mathrm X}_{#1}}

\newcommand{\clf}[1][*]{\myc_{#1}^{lf}}
\newcommand{\cf}[1][*]{\myc_{#1}}

\newcommand{\Hx}[1][*]{\myh{\mathrm X}^{#1}}
\newcommand{\hx}[1][*]{\myh{\mathrm X}_{#1}}

\newcommand{\rh}[1][*]{\tilde{\myh}_{#1}}

\newcommand{\gs}{\sigma }
\newcommand{\gt}{\tau}

\newcommand{\gD}{\Delta}
\newcommand{\gd}{\delta}

\renewcommand{\ge}{\varepsilon}

\let\oldsubset\subset
\renewcommand{\subset}[1][]{\overset{#1}{\oldsubset}}

\newcommand{\capp}{\mathbin{\frown}}

\let\oldin\in
\renewcommand{\in}[1][]{\overset{#1}{\oldin}}

\let\oldnotin\notin
\renewcommand{\notin}[1][]{\overset{#1}{\oldnotin}}

\DeclarePairedDelimiter\abs{\lvert}{\rvert}
\newcommand{\Supp}[2][]{\abs{#2}_{#1}}

\NewDocumentCommand\ccap{o}{\mathbin{\overset{\mathrm{c}
	\IfNoValueTF{#1}{} {, #1} }
	{\cap}  } }
\NewDocumentCommand\nceq{o}{\mathbin{\overset{\mathrm{c}
	\IfNoValueTF{#1}{} {, #1} }
	{\neq}  } }	
\NewDocumentCommand\cminus{o}{\mathbin{\oset{\mathrm{c}
	\IfNoValueTF{#1}{} {, #1} }
	{-}  } }	
	
\NewDocumentCommand\ceq{o}{\mathbin{\overset{\mathrm{c}
	\IfNoValueTF{#1}{} {, #1} }
	{=}  } }
\NewDocumentCommand\cneq{o}{\mathbin{\overset{\mathrm{c}
	\IfNoValueTF{#1}{} {, #1} }
	{\neq}  } }	
	
\NewDocumentCommand\csubset{o}{\subset[\mathrm{c}\IfNoValueTF{#1}{} {, #1} ]}

\makeatletter
\newcommand{\oset}[3][0ex]{
  \mathrel{\mathop{#3}\limits^{
    \vbox to#1{\\kern-2\ex@
    \hbox{$\scriptstyle#2$}\vss}}}}
\makeatother

\let\oldminus\-

\makeatletter
\newcommand{\DeclareMathActive}[2]{
  
  \expandafter\edef\csname keep@#1@code\endcsname{\mathchar\the\mathcode`#1 }
  \begingroup\lccode`~=`#1\relax
  \lowercase{\endgroup\def~}{#2}
  \AtBeginDocument{\mathcode`#1="8000 }
}

\newcommand{\std}[1]{\csname keep@#1@code\endcsname}
\patchcmd{\newmcodes@}{\mathcode`\-\relax}{\std@minuscode\relax}{}{\ddt}
\AtBeginDocument{\edef\std@minuscode{\the\mathcode`-}}
\makeatother

\begin{document}
\setstretch{1.1}
\title[Cyclic orders and Leary--Minasyan groups]{Cyclic orders and actions of Leary--Minasyan groups on coarse $\mathrm{PD}(n)$ spaces}

\author{Arka Banerjee}
\address{Auburn University, Parker Hall, Auburn, AL 36849, USA}
\email{azb0263@auburn.edu}
\author{Kevin Schreve}
\address{Louisiana State University, Baton Rouge, LA 70803, USA}
\email{kschreve@lsu.edu}
\begin{abstract}
We show that for $n \geq 3$, there are torsion-free CAT(0) groups with visual boundaries that embed into $S^n$ but which are not virtually the fundamental group of a compact aspherical $n+1$-manifold. The groups are the CAT(0) and not bi-automatic groups constructed previously by Leary and Minasyan. 
The obstruction comes from analyzing certain cyclic orders on the boundary of the Bass--Serre tree, in the same manner as Kapovich--Kleiner ruled out actions of Baumslag--Solitar groups on coarse $\mathrm{PD}(3)$ spaces. 
\end{abstract}
\maketitle

\section{Introduction}
This paper is inspired by generalizations of the Cannon Conjecture.

\begin{Cannon Conjecture}
Suppose $G$ is a hyperbolic group with Gromov boundary $\d G \cong S^2$. Then there is a short exact sequence $$1 \rightarrow F \rightarrow G \rightarrow \Gamma \rightarrow 1$$ $G$ where $F$ is finite and $\Gamma$ is a cocompact lattice in $\Isom(\mathbb{H}^3)$. In particular if $G$ is torsion-free then it is the fundamental group of a closed hyperbolic $3$-manifold.
\end{Cannon Conjecture}

There has been progress towards this conjecture, for instance Markovic~\cite{Markovic} showed that the conjecture holds if $G$ is virtually special, see also ~\cite{gm}, ~\cite{GHMOSW}.

Further investigations have explored which hyperbolic and relatively hyperbolic groups with planar boundaries are virtually Kleinian (see~\cite{Haiss},~\cite{HW}, ~\cite{gms}).
Note that there are examples of torsion-free hyperbolic groups with planar boundaries that are not fundamental groups of aspherical $3$-manifolds, but have finite index subgroups which are \cite{KK00},~\cite{HST}.

In fact, as far as we know, there are no counterexamples known to a CAT(0) version of Cannon's conjecture. 

\begin{conj}
Suppose $G$ is a CAT(0) group which admits a planar visual boundary.
Then $G$ has a finite index subgroup which is the fundamental group of an aspherical $3$-manifold. 
\end{conj}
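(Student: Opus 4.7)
The plan is to follow the broad strategy of Kapovich--Kleiner and Bestvina--Mess for hyperbolic groups with planar boundaries, adapting it to the CAT(0) setting. First, I would reduce to the case $\partial G \cong S^2$: if $\partial G$ is a proper planar continuum, a Bowditch--Papasoglu--Swenson style JSJ analysis of cut pairs and cut points in $\partial G$ should split $G$ along virtually cyclic or virtually $\zz^2$ subgroups, after which the vertex groups have simpler boundaries (Sierpinski carpets, arcs, circles) and one proceeds inductively, eventually invoking the classical surface and Seifert fibered classifications. In the main case $\partial G \cong S^2$, a Bestvina--Mess style argument should relate the cohomology at infinity of $G$ to the \v{C}ech cohomology of $\partial G$, and since $S^2$ has the local homology of a $2$-manifold this would force $G$ to be a coarse Poincar\'e duality group of dimension $3$.

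With coarse $\mathrm{PD}(3)$ in hand, the next step is to promote $G$ to the fundamental group of an honest aspherical $3$-manifold. When $G$ is hyperbolic this is Cannon's conjecture itself, currently known in the virtually special case by Markovic. When $G$ is non-hyperbolic, I would decompose $G$ along its maximal virtually $\zz^2$ subgroups; the resulting pieces should be either hyperbolic with planar boundary (handled by the previous case) or Seifert-fibered/graph-manifold in nature. The obstructions isolated in the present paper---cyclic orders on the boundary of the Bass--Serre tree arising from amalgamations over virtually abelian subgroups---should vanish automatically when $\partial G$ embeds in $S^2$, since the inclusion of a circle into $S^2$ does not admit the kind of exotic cyclic reorderings that produce the counterexamples for $n \geq 3$. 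This is precisely the dichotomy the main body of the paper exploits.

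The principal obstacle is that the passage from coarse $\mathrm{PD}(3)$ to a genuine aspherical $3$-manifold group already contains Cannon's conjecture as a special case, and no general approach is known outside the virtually special world. Moreover, JSJ theory for CAT(0) groups along virtually abelian subgroups is less developed than in the hyperbolic setting: ruling out exotic vertex groups whose boundary happens to be planar---without tautologically assuming a manifold structure---would require genuinely new ideas. In short, a proof would most likely need to simultaneously resolve a CAT(0) analogue of Cannon's conjecture and to develop a JSJ theory for CAT(0) groups that is sensitive to the cyclic-order phenomena analyzed in this paper, so that the absence of such obstructions in the planar case can be leveraged as positive input rather than merely as a consistency check.
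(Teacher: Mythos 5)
The statement you are addressing is not a theorem of the paper at all: it is stated as an open conjecture. The authors explicitly remark that no counterexamples to a CAT(0) Cannon conjecture in dimension $3$ are known, and the entire content of the paper is to show that the \emph{higher-dimensional} analogue fails --- the Leary--Minasyan groups have visual boundary embedding in $S^n$ for $n\geq 3$ yet admit no proper action on a coarse $\mathrm{PD}(n+2)$ space. So there is no ``paper proof'' to compare against, and your text should not be read as a proof either: by your own account it defers the central step to Cannon's conjecture (known only in the virtually special case) and to a JSJ theory for CAT(0) groups over virtually abelian subgroups that does not currently exist. A proposal whose key steps are themselves open problems is a research program, not a proof.

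Beyond that structural point, two specific steps are shakier than you indicate. First, the Bestvina--Mess style reduction is more delicate in the CAT(0) setting than you suggest: visual boundaries of CAT(0) groups are not unique (Croke--Kleiner), planarity of the boundary is not even known to be an invariant of the group (the paper cites this explicitly), and the hypothesis ``$\partial G\cong S^2$'' versus ``$\partial G$ embeds in $S^2$'' leads to genuinely different cases --- a proper planar compactum boundary need not trigger a cut-point/cut-pair JSJ of Bowditch--Papasoglu--Swenson type, whose theory is developed for hyperbolic and relatively hyperbolic groups rather than general CAT(0) groups. Second, your claim that the cyclic-order obstructions of this paper ``vanish automatically'' when the boundary is planar in $S^2$ is unsubstantiated: the mechanism in the paper is an obstruction to acting on a coarse $\mathrm{PD}(n+2)$ space extracted from a coarse embedding of $T\times\ee^n$, and in the would-be $3$-dimensional setting the analogous statement (Kapovich--Kleiner for $BS(p,q)$) rules groups \emph{out} rather than supplying a positive construction; nothing in the planarity hypothesis by itself produces the aspherical $3$-manifold you need. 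In short, the gap is not a fixable detail but the absence of the main argument.
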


In general, CAT(0) groups can have non-homeomorphic visual boundaries \cite{CK}. It is unknown whether planarity of the visual boundary is an invariant of a CAT(0) group, though Stark and the second author showed that having non-planar graphs in the visual boundary is not an invariant~\cite{SS}. 

There has also been interest in higher-dimensional analogues of Cannon's conjecture. In particular, Bartels, L\"uck and Weinberger showed that if $G$ is a torsion-free hyperbolic group with $\partial G = S^{n-1}$ and $n \geq 6$, then $G$ is the fundamental group of a closed, aspherical $n$-manifold~\cite{BLW}, see also \cite{LT} for the case when $\partial G$ is the $n$-dimensional Sierpinkski space. 
Bregman and Incerti-Medici have recently proved a similar statement for $n = 4$ in the cubulated hyperbolic case \cite{bim}.

The main theorem of this paper is a counterexample to the most ambitious high-dimensional  generalization of a CAT(0) Cannon Conjecture.

\begin{Theorem}\label{t:introthm}
For each $n \geq 3$, there are torsion-free $\cat$ groups with visual boundaries which embed into $S^n$ but which do not (virtually) act geometrically on any contractible $(n+1)$-manifold, in particular, they are not (virtually) the fundamental groups of compact aspherical $(n+1)$-manifolds.
\end{Theorem}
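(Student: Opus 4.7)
The plan is to follow the Kapovich-Kleiner strategy that rules out geometric actions of Baumslag-Solitar groups $BS(m,n)$ (with $|m| \neq |n|$) on coarse $PD(3)$ spaces, promoted to the higher-dimensional Leary-Minasyan setting. First I would recall the construction: $\mathbb{Z}^n$, a pair of finite-index subgroups $A$ and $A'$ of $\mathbb{Z}^n$, and an isomorphism $\phi : A \to A'$ induced by an element $g \in GL(n,\mathbb{Q})$ that is not (virtually) conjugate into $GL(n,\mathbb{Z})$. The resulting HNN extension $G = \langle \mathbb{Z}^n, t \mid t a t^{-1} = \phi(a), a \in A \rangle$ acts geometrically on a CAT(0) space $X$ obtained by gluing $n$-flats along codimension-one affine subspaces indexed by the Bass-Serre tree $T$. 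The visual boundary $\partial X$ is a tree-like gluing of $(n-1)$-spheres along the ends of $T$, and an explicit cell-by-cell construction shows it embeds into $S^n$.

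Next, assume for contradiction that $G$ acts geometrically on a contractible $(n+1)$-manifold $M$. Then $G$ is a coarse $PD(n+1)$ group, and each vertex stabilizer $\mathbb{Z}^n$ acts geometrically on a coarse $PD(n)$ subspace $F_v \subset M$. By coarse Alexander duality, each such $F_v$ coarsely separates $M$ into exactly two deep components. This lets one reconstruct the Bass-Serre tree coarsely inside $M$: vertices correspond to the components of the complement of the union of the $F_v$, and edges to the separating coarse $PD(n)$ subspaces themselves. Consequently, $\partial T$ embeds, up to the usual equivalence, into the spherical boundary $\partial M \cong S^n$ of $M$.

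The crucial structural input, analogous to the Kapovich-Kleiner planarity argument, is to extract from this embedding $\partial T \hookrightarrow S^n$ a $G$-invariant cyclic order on the set of edges of $T$ incident to each vertex. The idea is that for a vertex $v$, the limit set $\partial F_v$ is an $(n-1)$-sphere separating $S^n$ into two hemispheres, and the limit sets of the adjacent flats $F_{v'}$ accumulate onto $\partial F_v$ in a pattern controlled by which hemisphere they lie in; the resulting cyclic arrangement around $\partial F_v$ is compatible with the natural coset parametrization of the edges at $v$ by $\mathbb{Z}^n / A$ (for outgoing edges) and $\mathbb{Z}^n / A'$ (for the incoming edge, if any).

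The main technical obstacle is then the algebraic step: showing that no such $G$-invariant cyclic order can exist. The vertex group $\mathbb{Z}^n$ acts on its coset space by translation, and simultaneous preservation of the cyclic order across its orbit forces severe linear constraints; the stable letter $t$ must intertwine the orderings at adjacent vertices through the linear map $g$. Pushing these constraints around loops in the Bass-Serre tree in the style of the Kapovich-Kleiner "ratio" computation, one should show that preserving the combined cyclic order forces $g$ to be commensurable to an element of $GL(n,\mathbb{Z})$, which contradicts the Leary-Minasyan choice of $g$. The heart of the argument, and the hardest part, will be isolating the correct higher-dimensional cyclic-order invariant and pushing it through carefully enough to produce this commensurability obstruction without losing information to the $n-1$ "unused" dimensions of each flat.
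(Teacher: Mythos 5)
Your top-level strategy (coarse separation, a cyclic order, then an algebraic incompatibility) is the right family of ideas, but two of its load-bearing steps do not work as proposed. First, the geometric setup has an off-by-one and a more serious structural problem. With vertex group $\mathbb{Z}^n$ the visual boundary of $T\times\mathbb{E}^n$ is the join $\partial_\infty T * S^{n-1}$, which embeds in $S^{n+1}$, not $S^n$, and the space to be excluded is a coarse $\mathrm{PD}(n+2)$ space (equivalently, contractible $(n+2)$-manifolds); your indexing mixes the two. More importantly, you invoke ``the spherical boundary $\partial M\cong S^n$'' of the contractible manifold $M$: a contractible manifold admitting a geometric action need not be homeomorphic to Euclidean space and has no canonical sphere at infinity, and the ``virtually'' and ``compact aspherical with boundary'' parts of the statement force one into the coarse $\mathrm{PD}$ setting (via the Davis reflection trick), where there is no boundary at all. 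The paper's substitute is coarse Alexander duality applied inside $X$. Relatedly, your plan to ``reconstruct the Bass--Serre tree'' by cutting along the vertex flats $F_v$ is inconsistent with the duality you quote: a coarse $\mathrm{PD}(n)$ subspace of a coarse $\mathrm{PD}(n+1)$ space has exactly one deep separation (two deep components modulo shallow ones), whereas a vertex of $T$ has valence at least $3$, so the complementary components of $F_v$ cannot see the local tree structure. The correct move is to separate along $f((a,c)\times\mathbb{E}^n)$ for bi-infinite geodesics $(a,c)\subset T$ and to use the resulting coarse complementary components (not limit sets) to define a cyclic order on all of $\partial_\infty T$; making this well defined and checking cyclicity/transitivity is exactly where the duality class and the ``unique deep component'' lemmas are needed.

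Second, the incompatibility step — which you acknowledge is the heart — is proposed with the wrong mechanism. A Kapovich--Kleiner style ``ratio'' computation exploits $|p|\neq|q|$; here $A$ is conjugate to an orthogonal matrix, so $|\det A|=1$ and the two edge indices $|L:L'|$ and $|L:L''|$ coincide, and there is no such asymmetry to push around loops. Nor is the target ``$A$ not commensurable into $GL(n,\mathbb{Z})$'' the hypothesis actually available: the relevant assumption is only that $A$ has \emph{infinite order}. The obstruction is dynamical rather than arithmetic: infinite order of $A$ produces an element $a\in L$ acting on $T$ as an infinite-order elliptic isometry fixing a vertex and stabilizing every ball around it, hence a boundary point $x\in\partial_\infty T$ with $a^kx\neq x$ for all $k\neq 0$ but with $a^{n_i}x\to x$ along a subsequence. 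Such recurrent-but-not-periodic behavior is incompatible with any finite-index subgroup respecting a cyclic order on $\partial_\infty T$ that interacts with the visual topology in the way the construction guarantees (openness of order-intervals and a smallness statement for intervals between nearby points). Without this dynamical argument, or a genuine replacement for it, your sketch stops precisely where the proof has to begin.
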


The groups are the $\cat$ but not bi-automatic groups constructed previously by Leary and Minasyan~\cite{Leary-Minasyan}. These groups can be thought of as a CAT(0) analogue of the Baumslag--Solitar groups 
\[
BS(p,q):=\langle a,t\mid ta^pt^{-1}=a^q \rangle.
\]
 $BS(p,q)$ is an HNN extension of $\zz$ in which the stable letter conjugates the index $p$ subgroup to the index $q$ subgroup. Leary--Minasyan groups are analogously HNN extensions of $\zz^n$.
 \subsection{Leary--Minasyan groups} 
 
 Let $L$ be a finitely generated free abelian group, and let $\phi:L'\rightarrow L''$ be an isomorphism between finite-index subgroups of $L$.
 Define a group $G(L,\phi,L')$ as the HNN extension in which case the stable letter conjugates $L'$ to $L''$ via $\phi$:
\[
G(L,\phi,L'):=\langle L,t\mid tct^{-1}=\phi(c),\forall c\in L' \rangle.
\]
In the case when we are given a basis for $L\cong \zz^n$ and $\phi$ is described by a matrix, we simplify the notation slightly.
For $A\in GL(n,\mathbb{Q})$ and $L'$ a finite-index subgroup of $L\cap A^{-1} L=\zz^n\cap A^{-1}L$, we write $G(A,L')$ for the HNN extension defined as above
\[
G(A,L')=\langle L,t\mid tct^{-1}=Ac, \forall c\in L'\rangle.
\] 
\begin{Example}
When $n=1$, these groups coincide with Baumslag--Solitar groups. More precisely, if $n=1$, $A\in GL(1,\mathbb Q)$ is represented by $\frac{q}{p}\in \mathbb Q$, and $L'$ is the index $q$ subgroup of $\zz$, then $G(A,L')=BS(p,q)$. 
\end{Example}
Leary and Minasyan characterized when $G(A,L')$ is a $\cat$ group.
\begin{Theorem}[\cite{Leary-Minasyan}]\label{t:infinite index}    
The group $G(A,L')$ is $\cat$ if and only if $A$ is conjugate in $GL(n,\rr)$ to an orthogonal matrix. Moreover, if $A$ is conjugate in $GL(n,\rr)$ to an orthogonal matrix, then $G(A,L')$ acts geometrically on $T\times \ee^n$ where $T$ is the associated Bass-Serre tree and $T\times \ee^n$ is equipped with the product metric induced from the graph metrics on $T$ and the euclidean metric on $\ee^n$.
\end{Theorem}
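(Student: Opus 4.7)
My plan is to treat the two directions separately, using the Flat Torus Theorem for necessity and an explicit construction for sufficiency.

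For \emph{necessity}, I assume $G = G(A,L')$ acts geometrically on a CAT(0) space $X$. The Flat Torus Theorem applied to the rank-$n$ free abelian subgroup $L \cong \mathbb{Z}^n$ produces a closed convex $L$-invariant subspace $Y \times \mathbb{E}^n \subset X$ on which $L$ acts trivially on the first factor and as a discrete cocompact translation lattice on $\mathbb{E}^n$. Thus the translation-length function on $L$ coincides with the Euclidean norm of $c \in L \subset \mathbb{E}^n$, and extends linearly to a Euclidean norm $\|\cdot\|$ on $L \otimes \mathbb{R} = \mathbb{R}^n$. Since translation length is a conjugation invariant and $tct^{-1} = Ac$ for every $c \in L'$, I obtain $\|c\| = \|Ac\|$ on the finite-index sublattice $L'$, and linearity extends this identity to all of $\mathbb{R}^n$. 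Hence $A$ preserves a Euclidean inner product, so it is conjugate to an orthogonal matrix in $GL(n,\mathbb{R})$.

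For \emph{sufficiency}, I choose an inner product on $\mathbb{R}^n$ with respect to which $A$ is orthogonal, and regard $L$ as a discrete cocompact lattice of translations in the resulting $\mathbb{E}^n$. Define $\rho : G \to \Isom(\mathbb{E}^n)$ by $\rho(c)(x) = x + c$ for $c \in L$ and $\rho(t)(x) = Ax$; a short computation gives $\rho(t)\rho(c)\rho(t)^{-1}(x) = x + Ac$, matching the HNN relation, so $\rho$ is well defined. Combining $\rho$ with the Bass--Serre action of $G$ on its tree $T$ yields a diagonal isometric action on $T \times \mathbb{E}^n$ equipped with its CAT(0) product metric. A closed fundamental domain is the product of a single edge of $T$ with a fundamental parallelepiped for $L$ in $\mathbb{E}^n$, so the action is cocompact; properness follows from combining the cofinite vertex-orbit structure of $T$ with the proper translation action of each vertex stabilizer on the $\mathbb{E}^n$ factor.

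The most delicate point is the argument for necessity: one must verify that the translation-length function pulled back from the flat really gives a genuine Euclidean norm on $\mathbb{R}^n$, which in turn relies on the Flat Torus Theorem supplying an $n$-dimensional flat on which the torsion-free group $L$ of rank $n$ acts faithfully by translations. Once this is in place the remaining linear algebra is routine, and the sufficiency direction is essentially a matter of writing the product action down and checking the HNN relation.
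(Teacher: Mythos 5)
This theorem is quoted from \cite{Leary-Minasyan} and the paper gives no proof of its own; your argument is correct and is essentially the standard one from that source: the Flat Torus Theorem together with conjugacy-invariance of translation length (plus homogeneity/continuity or polarization to pass from $L'$ to all of $\rr^n$) for necessity, and the diagonal action on $T\times \ee^n$ with an $A$-invariant inner product for sufficiency. The only step you state tersely is properness, but it goes through as you indicate: $T$ is locally finite because $L'$ and $A(L')$ have finite index in $L$, and each vertex stabilizer $hLh^{-1}$ acts on $\ee^n$ by translations by an orthogonally rotated copy of the lattice, hence properly, which combined with local finiteness of $T$ gives properness of the product action.
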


Our main theorem is obtained as a corollary of Theorem~\ref{t:infinite index} and the following theorem.
\begin{Theorem}\label{t:main theorem}
    Suppose $A\in GL(n,\mathbb Q)$ has infinite order and is conjugate in $GL(n,\rr)$ to an orthogonal matrix. 
    Then the group $G=G(A,L')$ is CAT(0) and has no finite index subgroup  that acts properly on a coarse $\mathrm{PD}(n+2)$ space. 
\end{Theorem}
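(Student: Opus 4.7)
The plan is to follow the cyclic-order strategy of Kapovich--Kleiner, adapting their obstruction for Baumslag--Solitar groups acting on coarse $\mathrm{PD}(3)$ spaces to the higher-dimensional setting of Leary--Minasyan groups. Suppose for contradiction that a finite-index subgroup $G' \leq G$ acts properly on a coarse $\mathrm{PD}(n+2)$ space $X$. After passing to a further finite-index subgroup we may assume that $G'$ is torsion-free and that its Bass--Serre tree $T$ still has vertex and edge stabilizers isomorphic to $\zz^n$.

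The heart of the proof is the construction of a $G'$-equivariant cyclic order on the Cantor set $\partial T$ of ends of $T$. For each vertex $v \in T$ with stabilizer $V_v \cong \zz^n$, the $V_v$-orbit in $X$ is a coarsely embedded coarse $\mathrm{PD}(n)$-subspace of the coarse $\mathrm{PD}(n+2)$ space $X$, and hence has coarse codimension two. A coarse Poincar\'e--Lefschetz duality argument, in the spirit of Kapovich--Kleiner, then produces a ``link circle at infinity'' $S^1_v$ associated to $V_v$, on which $V_v$ acts preserving a cyclic order. The neighboring vertex groups in $T$ correspond to adjacent $\zz^n$-subgroups of $G'$, and the key geometric claim is that their orbits approach distinct points of $S^1_v$; this yields a $V_v$-equivariant injection of the star of $v$ in $T$ into $S^1_v$ and hence a cyclic order on the edges at $v$. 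Compatibility across edges of $T$, coming from shared edge stabilizers, then glues these local orders into a $G'$-invariant cyclic order on $\partial T$.

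The contradiction then comes from the infinite-order orthogonal hypothesis on $A$. A $V_v$-action on a finite set preserving a cyclic order factors through a finite cyclic quotient of $V_v$, so $L/L'$ and $L/L''$ are forced to be cyclic and the resulting rotation representations have kernels $L'$ and $L''$ respectively. Conjugation by the stable letter $t$ identifies $S^1_v$ with the link circle at an adjacent vertex and intertwines the two rotation representations via the automorphism $A$, producing compatibility constraints on these cyclic quotients under the $A$-action. Since $A$ has infinite order and is $\mathbb{R}$-conjugate to an orthogonal matrix, there is a rational $A$-invariant $2$-plane in $L \otimes \mathbb{Q}$ on which $A$ acts as an irrational rotation; this is incompatible with the finite cyclic compatibility coming from the cyclic order, yielding the desired contradiction.

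The main obstacle is the construction of the link circle $S^1_v$ and the injectivity of the local edge-to-circle map: establishing these via coarse Poincar\'e--Lefschetz duality in codimension two is the technical heart of the paper and the direct higher-dimensional analogue of Kapovich--Kleiner's planar separation argument. Once this is in place, deriving the contradiction from the $A$-action on finite-index subgroups of $L \cong \zz^n$ should be a matter of arithmetic bookkeeping.
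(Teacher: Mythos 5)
Your two-step skeleton (build a cyclic order on $\partial_\infty T$ from the coarse embedding, then show no finite index subgroup can respect it) is indeed the paper's strategy, but both steps as you execute them have genuine gaps. For the first step, you propose to attach to each vertex stabilizer orbit --- a coarse $\mathrm{PD}(n)$ subspace of coarse codimension two --- a ``link circle at infinity'' $S^1_v$, extract cyclic orders on vertex stars, and glue along edges. In the coarse category no such circle is available: a coarse $\mathrm{PD}(n+2)$ space has no boundary or link at infinity in any useful sense (the paper says this explicitly before its construction), and coarse Poincar\'e/Alexander duality produces cohomology classes and complementary components, not an actual circle with a $V_v$-action; the gluing of local star orders into a global order on $\partial_\infty T$ is likewise only asserted. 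The paper works instead in codimension one: for each pair $a,c\in\partial_\infty T$ the slab $f((a,c)\times\ee^n)$ is a coarsely embedded coarse $\mathrm{PD}(n+1)$ space, Theorem~\ref{CAD} and Lemma~\ref{l:coarse complement for PDn} give an essentially unique deep coarse complementary component $C(c-a)\subset X$, and one declares $[a,b,c]\in\mathcal{C}$ exactly when $f([o,b)\times\ee^n)$ is coarsely contained in $C(c-a)$; the cyclic order axioms and equivariance are then verified entirely with the calculus of deep and shallow components. Your ``technical heart'' is thus not just unproved, it is aimed at an object the framework does not supply.

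The second and more serious gap is the endgame. Forcing $L/L'$ and $L/L''$ to be cyclic is not a contradiction: in the standard Leary--Minasyan example both quotients are $\zz/5$. Since $A$ is conjugate to an orthogonal matrix, $|\det A|=1$ and $|L:L'|=|L:L''|$, so no Kapovich--Kleiner-style index count is available either; and there need not be any rational $A$-invariant $2$-plane at all (for instance when the characteristic polynomial of $A$ is irreducible over $\mathbb{Q}$), so the ``irrational rotation on a rational plane'' you invoke does not exist in general. The actual obstruction is dynamical, not arithmetic. Because $A$ has infinite order, Lemma~\ref{l:action on tree} produces an elliptic element $a\in L$ whose image in $\Isom(T)$ has infinite order while suitable powers act trivially on arbitrarily large balls about a fixed vertex; hence some $x\in\partial_\infty T$ is non-periodic but recurrent under $\beta=a^l\in\Gamma$. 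The paper then uses the compatibility of $\mathcal{C}$ with the visual topology (Lemma~\ref{l:interval is open} and Lemma~\ref{l:small intervals}) to show that an order-respecting action would trap the orbit $\{\beta^{ik}x\}$ inside an interval missing an open neighborhood of $x$, contradicting recurrence. Your proposal contains no counterpart of this argument, and purely local constraints at a vertex or along one edge cannot detect the obstruction, since the genuine action of $G$ on $T$ satisfies all of them.
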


A coarse $\mathrm{PD}(n)$ space is a generalization of a uniformly contractible $n$-manifold. If $G$ is the fundamental group of a compact aspherical $n$-manifold $N$ (possibly with boundary), then by Davis's reflection group trick $G$ is a subgroup of the fundamental group of another closed, aspherical $n$-manifold $M$. Then $G$ acts properly on the universal cover $\widetilde M$, which is a coarse $\mathrm{PD}(n)$ space.
Hence, Theorem \ref{t:introthm} follows from Theorem \ref{t:main theorem}.

\begin{Example} Here is a specific example, taken from \cite{Leary-Minasyan}, where we can apply Theorem~\ref{t:main theorem}. 
Let $\zz^2 = \langle a,b \rangle$, $L'$ the subgroup generated by $a^2b^{-1}$ and $ab^2$, and $L''$ the subgroup generated by $a^2b$ and $a^{-1}b^2$. 
Then the transformation taking $L$ to $L'$ is rotation through $\arccos(3/5)$, in particular, the corresponding group is CAT(0) but the order of the corresponding matrix is infinite. By Theorem~\ref{t:main theorem}, the boundary of the group embeds into $S^3$, but the group does not contain any finite index subgroup that acts properly on a coarse PD($4$) space.
\end{Example}

 \subsection{On the proof of Theorem~\ref{t:main theorem}}
 Kapovich and Kleiner proved the following: 

\begin{Theorem}[\cite{KK}]\label{t:KK theorem}
    There is no finite index subgroup of $BS(p,q)$ that acts properly on a coarse $\mathrm{PD}(3)$ space if $|p|\neq |q|$.
 \end{Theorem}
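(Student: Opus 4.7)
The plan is to argue by contradiction: suppose that a finite-index subgroup $H \leq BS(p,q)$ acts properly on a coarse $\mathrm{PD}(3)$ space $X$. After passing to a further finite-index subgroup, I may assume $H$ still acts on the biregular Bass--Serre tree $T = T_{p,q}$ with infinite cyclic vertex and edge stabilizers, where each edge group sits with index $p$ in one incident vertex stabilizer and index $q$ in the other. The strategy is to extract from $X$ a canonical cyclic order on the link $\mathrm{Lk}(v)$ of each vertex $v \in T$, and then to show that preservation of these cyclic orders under the $H$-action is incompatible with $|p| \neq |q|$.

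The geometric heart of the argument is coarse Alexander duality applied to the vertex stabilizer $Z = \Stab(v) \cong \mathbb{Z}$. The orbit $Z \cdot x_0$ is a coarse line, of codimension two in the coarse $\mathrm{PD}(3)$ space $X$, so coarse Alexander duality identifies the reduced coarse cohomology of its complement with that of a circle. This yields a canonical cyclic order on the ``coarse ends'' of $X$ around $Z \cdot x_0$. The orbits of the edge stabilizers incident to $v$ are coarse lines parallel to $Z \cdot x_0$ and give $p+q$ distinct coarse ends; these provide the desired cyclic order on $\mathrm{Lk}(v)$, which by naturality is $H$-equivariant relative to the action on $T$.

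Next I would exploit this cyclic order combinatorially. The vertex group $Z$ acts on the $p+q$ edges of $\mathrm{Lk}(v)$ with two orbits, of sizes $p$ and $q$ (the cosets of the two edge groups), each permuted in a single cycle that respects the cyclic order. Hence the cyclic order must be an evenly spaced interleaving of a $p$-cycle with a $q$-cycle. The stable letter $t$ identifies the cyclic order at $v$ with that at the adjacent vertex $t \cdot v$, but swaps the roles of the two orbits, turning the $p$-cycle into a $q$-cycle and vice versa. The hard part is the resulting rigidity assertion: no such evenly interleaved cyclic order admits a role-swapping self-identification unless $|p|=|q|$. Beyond this combinatorial rigidity, the main technical subtlety is the application of coarse Alexander duality itself, where one must check that the cyclic order produced is intrinsic to the coarse $\mathrm{PD}(3)$ structure and genuinely equivariant with respect to both the vertex group and the stable letter.
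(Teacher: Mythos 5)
Your proposal diverges from the paper's (i.e.\ Kapovich--Kleiner's) argument at the crucial step, and the divergence is where it breaks. You want to apply coarse Alexander duality to the codimension-two subset $Z\cdot x_0$ (a coarse line) and extract ``a canonical cyclic order on the coarse ends of $X$ around $Z\cdot x_0$,'' realized by the incident edge-group orbits. Two things go wrong. First, duality in codimension two does not produce any ordering mechanism: for a coarse line $A=Z\cdot x_0$ in a coarse $\mathrm{PD}(3)$ space one gets $\Hx[1](X-A)\cong\hx[2](A)=0$ and $\Hx[2](X-A)\cong\hx[1](A)\cong\zz$, so $A$ does not coarsely separate $X$ at all and there are no deep complementary components to order; knowing that the complement has the coarse cohomology of a circle is not, by itself, a cyclic order on anything, and the machinery that converts cohomology classes into geometric pieces (Lemmas~\ref{l:coarse complement} and \ref{l:coarse complement for PDn}) only works in codimension one via $\Hx[1]$. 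Second, and independently fatal, the ``$p+q$ distinct coarse ends'' do not exist: the edge groups at a vertex $v$ of the Bass--Serre tree are $\langle a^p\rangle$ and $\langle a^q\rangle$, which have finite index in $\Stab(v)=\langle a\rangle$, so every incident edge-group orbit is coarsely equal to $Z\cdot x_0$ itself. Coarse geometry cannot distinguish the $p+q$ edges in $\mathrm{Lk}(v)$, so there is no invariant cyclic order on the link to feed into your interleaving argument, and the concluding combinatorial rigidity claim is left unproved in any case.

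What Kapovich--Kleiner (and this paper, in its generalization) actually do is separate in codimension one: for a bi-infinite geodesic $(a,c)$ in $T$, the subset $f((a,c)\times\rr)$ is a coarse $\mathrm{PD}(2)$ space in the coarse $\mathrm{PD}(3)$ space, so $\Hx[1]$ of its complement is $\zz$ and it has a well-defined deep complementary component $C(c-a)$; declaring $[a,b,c]$ when $f([o,b)\times\rr)\csubset C(c-a)$ gives a cyclic order on $\partial_\infty T$ (not on vertex links), respected by any isometry of $T\times\rr$ that coarsely extends over $X$ (Proposition~\ref{p:cyclic order high dimension} with $n=1$). The contradiction with $|p|\neq|q|$ is then dynamical rather than a local count at one vertex: some power of the elliptic generator $a$ lies in the finite-index subgroup, fixes arbitrarily large balls around $v$, yet moves a boundary point $x$ with infinite order and with iterates returning close to $x$; this behavior is incompatible with preserving (or reversing) a cyclic order that interacts with the visual topology as in Lemmas~\ref{l:interval is open} and \ref{l:small intervals} (see Proposition~\ref{p:incompatible action}). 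To repair your argument you would need to replace the vertex-link order by this boundary order and replace the interleaving rigidity by such a dynamical incompatibility.
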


 Let us briefly sketch the proof of the above theorem from~\cite[Section 9]{KK}. We start by recalling a few definitions. A map $f:X\to Y$ between two metric spaces is a \textbf{coarse embedding} if there are two proper functions $\rho_-,\rho_+:[0,\infty)\to [0,\infty)$ such that for all $x,y\in X$ we have
 \[
 \rho_-(d(x,y))\leq d(f(x),f(y))\leq \rho_+(d(x,y)).
 \]
 A map $f:X\to Y$ between two metric spaces is called a \textbf{coarse equivalence} if $f$ is a coarse embedding and there exists an $r\geq 0$ such that $Y$ is contained in the $r$-neighborhood of $f(X)$.
 For a given coarse embedding $f:X\to Y$, we say a map  $g:X\to X$ \textbf{coarsely extends} to a map
$\bar{g}:Y\to Y$ if there exists $r\geq 0$ such that $d(f\circ g(x),\bar{g}\circ f(x))\leq r$ for all $x\in X$.

 Suppose that there is a finite index subgroup of $BS(p,q)$ that acts properly on a coarse $\mathrm{PD}(3)$ space and $|p|\neq |q|$.
The universal cover of the Cayley 2-complex of $BS(p,q)$ is homeomorphic to $T\times \rr$, where $T$ is the Bass-Serre tree of the splitting, and hence there is a $BS(p,q)$-equivariant coarse equivalence between $BS(p,q)$ and $T\times \rr$.
It follows that there is a $BS(p,q)$-equivariant coarse embedding of $T\times \rr$ into a coarse $\mathrm{PD}(3)$ space.
Now, there are two key steps to get a contradiction from here.
    In the first step, one shows that given a coarse embedding of $T\times \rr$  into a coarse $\mathrm{PD}(3)$ space, there is a cyclic order (see Definition~\ref{d:cyclic order}) on $\partial_\infty T$ that is respected by any isometry of $T\times \rr$ that coarsely extends to a coarse equivalence of the coarse PD($3$) space.
    In the second step,  one shows that no finite index subgroup of $BS(p,q)$ respects the cyclic order on $\partial_\infty T$ obtained in the first step when $|p|\neq |q|$.
    This gives a contradiction.

We follow the same approach to prove Theorem~\ref{t:main theorem}.  
Since $G(A,L')$ acts geometrically on $T\times \ee^n$ by Theorem~\ref{t:infinite index},
there is a $G(A,L')$-equivariant coarse equivalence between $G(A,L')$ and $T\times \ee^n$.

 Theorem~\ref{t:main theorem} will follow from the following two propositions.

\begin{Prop}[Cyclic order]\label{t:Cyclic order}
Given a coarse embedding of $T\times \ee^n$ into a coarse $\mathrm{PD}(n+2)$ space $X$, there exists a cyclic order on $\partial_\infty T$ such that for any
     $g\in \Isom(T)\times \Isom(\ee^n)$ that coarsely extends to a coarse equivalence $X\to X$, the action of $g$ on the $T$ factor respects the cyclic order on $\partial_\infty T$.
\end{Prop}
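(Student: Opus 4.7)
The plan is to follow the blueprint laid out by Kapovich--Kleiner \cite{KK} for $BS(p,q)$, replacing the $\rr$ factor by $\ee^n$ and the coarse $\mathrm{PD}(3)$ target by a coarse $\mathrm{PD}(n+2)$ target. Let $f \colon T \times \ee^n \hookrightarrow X$ be the given coarse embedding. The central geometric input, which I would develop first, is a separation statement: for any bi-infinite geodesic $\gamma \subset T$, the subspace $\gamma \times \ee^n$ is coarsely equivalent to $\ee^{n+1}$, so under $f$ it is a coarse $\mathrm{PD}(n+1)$ subspace of the coarse $\mathrm{PD}(n+2)$ space $X$. By coarse Alexander duality in the form used in \cite{KK}, its image coarsely separates $X$ into exactly two unbounded sides, and a choice of orientation on $X$ together with an orientation of $\gamma$ labels those two sides consistently as "positive" and "negative."

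Granting this, I would define the ternary relation on $\partial_\infty T$ as follows. For distinct $\xi_1,\xi_2,\xi_3 \in \partial_\infty T$, let $\gamma_{12}$ be the bi-infinite geodesic in $T$ oriented from $\xi_1$ to $\xi_2$, and let $r_3$ be a geodesic ray representing $\xi_3$. Then $f(r_3 \times \{0\})$ eventually lies in one of the two coarse sides of $f(\gamma_{12}\times\ee^n)$; declare $(\xi_1,\xi_2,\xi_3)$ to be positively oriented when that side is the positive one. Well-definedness (independence of the chosen representatives) follows from the coarse embedding hypothesis: replacing $r_3$ by a subray or by another ray in its class does not change which coarse side its image eventually enters. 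The cyclic and transitivity axioms of a cyclic order are then verified by a tripod argument, namely that three rays from a common vertex $v\in T$ give rise to three half-space products whose $f$-images sit around $f(\{v\}\times \ee^n)$ in a cyclically arranged pattern dictated by the orientation of $X$.

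Invariance is then immediate from the definition. If $g=(g_T,g_E)\in \Isom(T)\times \Isom(\ee^n)$ coarsely extends to a coarse equivalence $\bar g \colon X\to X$, then $\bar g$ either preserves or reverses the coarse $\mathrm{PD}(n+2)$ orientation of $X$, and in either case it carries the two coarse sides of $f(\gamma_{12}\times\ee^n)$ to the two coarse sides of $f(g_T\gamma_{12}\times\ee^n)$ (possibly swapping them). Hence $g_T$ either preserves the cyclic order on $\partial_\infty T$ or reverses it, and both count as respecting the cyclic order.

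The main obstacle, as in \cite{KK}, is the separation statement of the first paragraph: verifying that a coarsely embedded copy of $\ee^{n+1}$ inside a coarse $\mathrm{PD}(n+2)$ space really has two coarsely disjoint unbounded sides and that these sides transform naturally under coarse self-equivalences of $X$. For $n=1$ this is exactly what is carried out in \cite[Section 9]{KK} via coarse Alexander duality and a coarse Jordan separation theorem; the generalization to arbitrary $n$ should be routine in principle but requires careful bookkeeping of the duality isomorphism and of the orientation data at infinity. Once that ingredient is in place, the combinatorics of the tripod and the verification of the cyclic order axioms parallel \cite[Section 9]{KK} verbatim.
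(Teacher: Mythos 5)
Your overall strategy is the same as the paper's (and as Kapovich--Kleiner's): $f(\gamma\times\ee^n)$ is a coarsely embedded coarse $\mathrm{PD}(n+1)$ space in the coarse $\mathrm{PD}(n+2)$ space $X$, it coarsely separates $X$ into two deep sides, the cyclic order records which side a third ray falls into, and invariance comes from a global $\pm$ sign attached to $\bar g$. The gap is that you place the difficulty in the wrong spot and leave the actual content unproved. Given coarse Alexander duality, the two-sidedness statement is the easy part ($\Hx[1](X-f(\gamma\times\ee^n))\cong\hx[n+1](f(\gamma\times\ee^n))\cong\zz$, so there is an essentially unique deep separation). What is genuinely hard is (i) labeling the two sides of \emph{every} geodesic coherently and (ii) verifying transitivity and cyclicity. ``A choice of orientation on $X$ together with an orientation of $\gamma$ labels those two sides consistently'' is an assertion, not an argument: a coarse $\mathrm{PD}(n+2)$ space carries no a priori orientation, and the only available substitute is the duality class $U$ in $\Hx(X\otimes X-\delta(X))\cong\zz$ together with the integral classes $\phi(c-a)\in\Hx[1](X-f(Y))$ it produces via slant product. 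One must then prove that the sides selected by these classes are compatible across different geodesics, i.e.\ extract the geometric consequence of the additivity $\phi(b-a)+\phi(c-b)=\phi(c-a)$: this is the paper's Lemma~\ref{l:alt crit} ($[a,b,c]\in\mathcal C$ iff $C(b-a)\csubset C(c-a)$ iff $C(c-b)\csubset C(c-a)$), whose proof occupies most of Section~\ref{s:cyclic order} and requires uniqueness of components realizing a given class, control of shallow versus deep $s$-path components (Lemma~\ref{l:coarse complement for PDn}(3),(4)), disjointness of the two sub-separations $S_1,S_2$, and integral coefficients to exclude sign flips. Your ``tripod argument'' names exactly this step without carrying it out, and it is not available ``verbatim'' from \cite{KK}, where the details are omitted (a written proof existed only for trees with finitely many ends); supplying it in general is precisely the contribution of this section.

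There is a second, smaller but genuine, gap in the invariance step. Saying that $\bar g$ carries the two sides of $f(\gamma_{12}\times\ee^n)$ to the two sides of $f(g_T\gamma_{12}\times\ee^n)$ ``possibly swapping them, and in either case'' $g_T$ respects the order does not suffice: if the swap could occur for some geodesics and not others, $g_T$ would neither preserve nor reverse $\mathcal C$. You need a single sign governing all geodesics at once. In the paper this comes from naturality of the slant product with the duality class: $\bar g^*$ acts on $\Hx(X\otimes X-\delta(X))\cong\zz$ by $\pm1$, and the identity $\Phi(\bar g^*(\mathcal E),c-a)=\bar g^*(\Phi(\mathcal E,g(c)-g(a)))$ converts that one global sign into $\bar g(C(c-a))\ceq C(g(c)-g(a))$ for all pairs simultaneously (or with all pairs reversed). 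To complete your argument you would have to formulate and prove this uniform-sign statement, which again forces you to work with the duality class itself rather than an informal notion of orientation.
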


The existence of this cyclic order was also shown in~\cite{KK}, though the details are omitted.
A complete proof was later given in~\cite{HST2} for the case where $T$ has finitely many ends.  
Our main contribution is a complete and alternative proof of the general case using the coarse cohomology framework of \cite{BO}. 
   
\begin{Prop}[Incompatible group action]\label{t:Incompatible group action}
Suppose $A$ has infinite order and is conjugate
in $GL(n,\rr)$ to an orthogonal matrix. 
If $\Gamma$ is a finite index subgroup of $G(A,L')$, then the action of $\Gamma$ on $\partial_\infty T$ does not respect any cyclic order on $\partial_\infty T$ as in Proposition~\ref{t:Cyclic order}.
\end{Prop}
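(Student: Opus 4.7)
The plan is to derive a contradiction by producing an $A$-invariant finite-index sublattice of $L \cong \zz^n$ and invoking Kronecker's theorem that an algebraic integer all of whose Galois conjugates lie on the unit circle must be a root of unity.

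First I would replace $\Gamma$ by its normal core in $G(A,L')$, which is still finite index and inherits the cyclic-order-preserving action, and set $H_0 := \Gamma \cap L$. A routine unpacking shows that the cyclic order on $\partial_\infty T$ from Proposition \ref{t:Cyclic order} induces a cyclic order on the (finite) link of each vertex $v \in T$ that is preserved by $\Stab_\Gamma(v)$; this uses tree-compatibility, namely that the ends of $T$ passing through each edge at $v$ form a block of the cyclic order. At the base vertex $v_0$ the link is $L/L' \sqcup L/L''$, with $L$ acting by translation on each factor, and the kernel of this action is $L' \cap L''$. Because a cyclic-order-preserving bijection of a finite set with a fixed point is the identity, any $l \in H_0 \cap L'$ acts trivially on the link (it fixes every forward edge) and therefore lies in $L' \cap L''$; the symmetric argument with backward edges gives $H_0 \cap L'' \subseteq L' \cap L''$. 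Intersecting then yields $K := H_0 \cap L' = H_0 \cap L''$, a finite-index subgroup of $L$ contained in $L' \cap L''$.

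Next I would exploit normality of $\Gamma$ and the HNN relation to show $A(K) = K$. For $x \in K \subseteq L'$, the element $Ax$ equals $txt^{-1}$ in $G$, which lies in $\Gamma$ by normality (since $x \in \Gamma$) and in $L''$ by the HNN relation, hence in $\Gamma \cap L'' = K$. The symmetric argument gives $A^{-1}(K) \subseteq K$. Since $K$ has finite index in $L$ it is a full-rank sublattice of $\zz^n$, and choosing a $\zz$-basis expresses $A|_K$ as a matrix in $GL(n,\zz)$ with the same eigenvalues as $A$. These eigenvalues lie on the unit circle, because $A$ is conjugate in $GL(n,\rr)$ to an orthogonal matrix, and they are algebraic integers, so Kronecker's theorem forces them to be roots of unity and $A$ to have finite order, contradicting the hypothesis.

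The main obstacle should be the first step: carefully verifying that the construction in Proposition \ref{t:Cyclic order} really produces a tree-compatible cyclic order that restricts to the link of each vertex, so that $\Stab_\Gamma(v)$ acts on that link by cyclic-order-preserving bijections. Once this compatibility is established, the algebraic deduction producing the $A$-invariant sublattice $K$ and the appeal to Kronecker's theorem are short and essentially formal.
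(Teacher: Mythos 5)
Your algebraic endgame is fine: with a normal finite-index $\Gamma$, the identity $K:=(\Gamma\cap L)\cap L'=(\Gamma\cap L)\cap L''$ together with $tKt^{-1}=A(K)\subset \Gamma\cap L''$ and $t^{-1}Kt\subset \Gamma\cap L'$ would give a full-rank $A$-invariant sublattice, and Kronecker's theorem (eigenvalues of $A$ are then algebraic integers, all of whose Galois conjugates lie on the unit circle because $A$ is conjugate to an orthogonal matrix and its characteristic polynomial has rational coefficients) forces $A$ to have finite order. But the step you yourself flag as the ``main obstacle'' is a genuine gap, not a routine verification, and the whole proof hinges on it. Proposition~\ref{t:Cyclic order} asserts only the existence of a cyclic order on $\partial_\infty T$ respected by the relevant isometries; it does not assert the ``tree-compatibility'' you need, namely that for each edge $e$ at a vertex the set of ends passing through $e$ is a block (an interval) of the order, so that $\Stab_\Gamma(v)$ acts on the finite link by order-respecting bijections. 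Nothing in the paper establishes this, and it does not follow from the connectivity arguments one would first try: if $b,b'$ are two ends behind the edge $e$ at the basepoint and $a,c$ are ends on the other side, the subtree $T_e$ connecting the tails of $[o,b)$ and $[o,b')$ passes within bounded distance of $(a,c)$ near the vertex, and after crossing with $\ee^n$ the set $f(T_e\times\ee^n)$ is \emph{not} coarsely disjoint from $f((a,c)\times\ee^n)$, so Lemma~\ref{l:connected and complementary comp} does not place both rays in the same coarse complementary component $C(c-a)$. Ruling out the ``non-block'' order (e.g.\ $[a,b,c]$ and $[c,b',a]$ simultaneously) would require a new homological argument in the spirit of Lemma~\ref{l:alt crit}, which you have not supplied; as written, the cyclic order you quotient onto the link may simply fail to exist for all orders covered by Proposition~\ref{t:Cyclic order}, which is exactly the class the statement quantifies over.

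Two smaller points: since Proposition~\ref{t:Cyclic order} only gives that elements \emph{respect} (preserve or reverse) the order, your ``a cyclic-order-preserving bijection of a finite set with a fixed point is the identity'' step needs a preliminary passage to the index-at-most-two subgroup of order-preserving elements (a reversal can fix a link point without being trivial), and you should note that the link has at least three points under the standing hypotheses (if $L'=L$ then $A\in GL(n,\zz)$ is conjugate to an orthogonal matrix and Kronecker already gives finite order). For comparison, the paper avoids any link/block structure entirely: it proves two compatibility statements between the constructed order and the visual topology (Lemmas~\ref{l:interval is open} and~\ref{l:small intervals}) and then runs a dynamical argument with the element of Lemma~\ref{l:action on tree}, whose powers converge to the identity on balls in $T$ while moving a boundary point $x$ with infinite orbit; an order-preserving ``rotation-like'' action cannot have orbit points returning to $x$ while staying on one side of a complementary interval. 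If you can actually prove the block property for the orders produced by Proposition~\ref{t:Cyclic order}, your argument would be an attractive, more algebraic alternative; without that proof it does not yet establish the proposition.
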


Now we give a proof of Theorem~\ref{t:main theorem} assuming Proposition~\ref{t:Cyclic order} and Proposition~\ref{t:Incompatible group action}.

\begin{proof}[Proof of Theorem~\ref{t:main theorem}]
    On the contrary, suppose $G$ acts on a coarse $\mathrm{PD}(n+2)$ space $X$.
   Then we consider the orbit map $G\rightarrow X$, $g\mapsto gx_0$ for some $x_0\in G$.
   This is a $G$-equivariant coarse embedding from $G$ to $X$.
   Since $G$ acts geometrically on $T\times \ee^n$ by isometries~\cite[Theorem 7.5]{Leary-Minasyan}, there is a $G$-equivariant coarse equivalence between $G$ and $T\times \ee^n$.
   It follows that there is a coarse embedding of $T\times \ee^n$ into $X$.
   Furthermore, since $G$ acts on $X$, every element $g\in G<\Isom(T)\times \Isom(\ee^n)$ coarsely extends to a coarse equivalence $X$.
   By Proposition~\ref{t:Cyclic order}, there is a cyclic order on $\partial_\infty T$ that is respected by every element in $G$.
  By invoking Proposition~\ref{t:Incompatible group action}, we arrive at a contradiction.
\end{proof}

\subsection{Overview}
The remainder of the article is dedicated to proving Proposition~\ref{t:Cyclic order} and Proposition~\ref{t:Incompatible group action}. In Section~\ref{s:preliminaries}, we provide the necessary background for these proofs. Section~\ref{s:cyclic order} contains the proof of Proposition~\ref{t:Cyclic order} (Proposition~\ref{p:cyclic order high dimension}), while Section~\ref{s:incompatible group action} is devoted to the proof of Proposition~\ref{t:Incompatible group action}~(Proposition~\ref{p:incompatible action}).

\subsection{Acknowledgements}
We thank Boris Okun for useful comments on an earlier draft of this article. The second author is supported by NSF grants DMS-2203325 and DMS-2505290.

\section{Preliminaries}\label{s:preliminaries}
In this section, we review the concept of coarse (co)homology of the complement, as developed in~\cite{BO}, which serves as the main technical tool used throughout this article.

We first fix some notation.
 Let $(X, d)$ be a metric space.
For $A \subset X$ denote $$N_{R}(A)=\{x \in X \mid d(x, A) <R \}.$$
We say $A$ is \emph{coarsely contained} in $B$, denoted by $A \csubset B$, if $A \subset N_R(B)$ for some $R$.
Two subsets are \emph{coarsely equal}, $A \ceq B$, if $A \csubset B$ and $B \csubset A$, and $A \ccap B \ceq C$ if for all sufficiently large $R$, $N_{R}(A) \cap N_{R}(B) \ceq C$.
The coarse intersection is not always well-defined, it may happen that the coarse type of $N_{R}(A) \cap N_{R}(B) $ does not stabilize as $R$ goes to infinity.
However the notion ``coarse intersection is coarsely contained in'' is well-defined;
$A \ccap B \csubset C$ means that for any $R$, $N_{R}(A) \cap N_{R}(B) \csubset C$.

\subsection{Coarse (co)homology} 
We will refer to points in $X^{n+1}$ as $n$-simplices.
In what follows, we will need to measure distances between simplices of different dimensions. 
A convenient way to do this is to stabilize simplices by repeating the last coordinate, as follows.
Denote by $X^\infty$ the subset of the product of countably many copies of $X$, consisting of eventually constant sequences.
Equip $X^{\infty}$ with the $\sup$ metric.
Let $i:X^{n+1} \to X^\infty$ denote the map $ (x_{0}, \dots, x_{n}) \mapsto (x_{0}, \dots, x_{n}, x_{n}, x_{n}, \dots)$.
For a function $\phi:X^{n+1} \to \zz$ define its stabilized support
\[
\Supp{\phi} = \{i(\gs) \mid \gs \in X^{n+1} \text{ and } \phi(\gs) \neq 0\} \subset X^{\infty}.
\]
Let $\gD=i(X)$ denote the diagonal of $X^{\infty}$.

We now define coarse (co)homology theories, following Roe~\cite[Section 2.2]{r93} using the language of \cite{BO}. Let $G$ be an abelian group. First consider the following cochain complex 
\[
\C[*](X)=\{\phi: X^{*+1} \to G\}
\]
with the coboundary operator
\[
d(\phi)(x_0, \dots, x_n)=\sum_{i=0}^n(-1)^{i} \phi(x_0, \dots, \hat{x}_i, \dots, x_n).
\]
This is an acyclic complex. 
The coarse cochain complex is
\[
\Cx(X;G) = \{\phi \in \C(X;G) \mid \Supp{\phi} \ccap \gD \ceq * \}.
\]
The coboundary operator $d$ maps $\Cx(X)\to \Cx[*+1](X)$.
The coarse cohomology $\Hx(X)$ is defined to be the cohomology of the complex $(\Cx(X),d)$.

There is similarly a dual coarse homology theory $\hx$, which is due to
Block--Weinberger~\cite[Section 2]{blockweinberger} and Higson--Roe~\cite[Section 2]{higsonroe}.
However, we are going to follow Hair~\cite[Section 1.6.1]{h10} which gives a more appropriate version of coarse homology for our needs.

We consider the complex of finitely supported integral chains
\[
\cf(X;G):=\{\sum_{i=1}^k c_i \sigma_i \mid c_i \in G, \, \sigma_i\in X^{*+1}\}
\]
equipped with the usual boundary map, defined on the basis by
\[
\partial (x_0, \dots, x_n):=\sum_{i=0}^n (-1)^i(x_0, \dots, \hat{x}_i, \dots, x_n).
\]
Note that the boundary map $\partial$ is well-defined on a larger complex of \emph{locally finite} chains $\clf(X; G)$ which consists of chains $c$ satisfying that for any bounded $B \subset X$ only finitely many simplices in $c$ have vertices in $B$.
The \emph{coarse homology} $\hx(X)$ is the homology of the following subcomplex of $\clf(X)$
\[
\cx(X;G):=\{c\in \clf(X;G)\mid \Supp{c}\csubset \gD\}
\]
equipped with the boundary operator $\partial$. 
\begin{Example}
    Roe showed that for uniformly contractible proper metric spaces the coarse cohomology is isomorphic to the compactly supported Alexander--Spanier cohomology~\cite[Proposition 3.33]{r93}.
    In particular, this applies to the universal cover of finite aspherical complexes.
    In this case the coarse homology is isomorphic to the locally finite homology~\cite[Chapter 2]{r96}.
    For example,
    \[
    \hx(\rr^n;G)=\Hx(\rr^n;G)=
    \begin{cases}
        G & *=n, \\
        0 & \text{otherwise}.
    \end{cases}
    \]
\end{Example}

For a subset $A\subset X$, we denote the set $i(A)\subset \gD$ by $\gD_A$.
The coarse cohomology of the complement of $A$ is $X$, denoted by $\Hx(X-A;G)$,  is the cohomology of the following complex.

  \[ \Cx[n](X-A;G)= \{ \phi \in \C[n](X;G) \mid \Supp{\phi} \ccap \gD \csubset \gD_A \}
  \]

  \subsection{Coarse complementary component}
  A subset $C\subset X$ is a \emph{coarse complementary component} of $A$ if $C\ccap (X-C)\csubset A$.
  A coarse complementary component $C$ of $A$ is \emph{shallow} if $C\csubset A$, otherwise it is \emph{deep}.
The \emph{$r$-boundary} of $C$ is
\[
\partial_r C := \{x \in X-C \mid d(x, C) \leq r\}.
\]
\begin{Lemma}[\cite{BO}]\label{l:complement criterion}
    $C$ is a coarse complementary component of $A$ if and only if $\partial_r(C) \csubset A$ for all $r$.
\end{Lemma}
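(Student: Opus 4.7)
The plan is to verify each direction by unpacking the definition of coarse complementary component and comparing it to $\partial_r C$. Both implications are nearly tautological once one observes that $\partial_r C$ is essentially the set $N_r(C) \cap N_r(X-C)$ restricted to $X - C$. Neither direction should require any machinery beyond the definitions.

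For the forward direction, assuming $C \ccap (X-C) \csubset A$, I would note that by definition $\partial_r C \subset N_r(C) \cap (X-C)$, and $X - C$ is trivially contained in $N_r(X-C)$. So $\partial_r C \subset N_r(C) \cap N_r(X-C)$, and the hypothesis applied at scale $R = r$ immediately yields $\partial_r C \csubset A$.

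For the backward direction, assuming $\partial_r C \csubset A$ for every $r$, I need to show that for each $R$ the set $N_R(C) \cap N_R(X-C)$ is coarsely contained in $A$. The natural approach is a two-case argument on a point $x$ of this intersection. If $x \in X - C$, then $d(x, C) < R$ places $x$ directly in $\partial_R C$, which is coarsely contained in $A$ by hypothesis. If $x \in C$, then $d(x, X - C) < R$ supplies a point $y \in X - C$ with $d(x, y) < R$ and hence $d(y, C) < R$, so $y \in \partial_R C$ is close to $A$; the triangle inequality then controls the distance from $x$ to $A$.

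I expect no real obstacle here; the only substantive content is careful bookkeeping of the constants hidden in the ``some $R$'' in the notation $\csubset$, which in the end combine additively (the $R$ from the case analysis plus the constant from $\partial_R C \csubset A$) to give a single uniform bound on $N_R(C) \cap N_R(X-C) \subset N_{R'}(A)$.
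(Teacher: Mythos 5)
Your proof is correct and is the straightforward definitional unpacking; the paper itself states this lemma as a citation to \cite{BO} without reproducing a proof, and your two-case argument for the backward direction (points of $X-C$ land directly in $\partial_R C$, points of $C$ are within $R$ of such a point, then triangle inequality) is exactly the expected argument. One trivial bookkeeping point: $\partial_r C$ is defined with $d(x,C)\le r$ while $N_r(C)$ uses a strict inequality, so in the forward direction you should observe $\partial_r C\subset N_{r+1}(C)\cap N_{r+1}(X-C)$ and apply the hypothesis at scale $r+1$; this changes nothing of substance.
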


The complementation map $C \to X-C$ is a free involution on $2^{X}$.
Let $\cS$ be the quotient space, we will think of its elements as unordered pairs $\{C, X-C\}$, and refer to them as \emph{separations} of $X$.
Alternatively, since $X-C=X \symdiff C$, one can think of $\cS$ as a quotient of abelian groups $2^{X}/ \{\emptyset, X\} $, where the addition is given by the symmetric difference.

Let $\cC_{A}$ denote the collection of all coarse complementary components of $A$.
As before, each $C \in \cC_{A}$ determines a separation $\{C, X-C\} \in \cS$, we will refer to it as a \emph{separation of $X$ with respect to $A$}.
Such a separation is \emph{deep} if both $C$ and $X-C$ are deep, and \emph{shallow} otherwise.
We will say that \emph{$A$ separates $X$} if there exists a deep separation of $X$ with respect to $A$.
Let $\cS_A$ denote the collection of all such separations, and let $\cS\cS_A$ be the subcollection of shallow separations.
It is shown in ~\cite[Section 5]{BO} that $\cS\cS_{A}$ and  $\cS_{A}$ are subgroups of $\cS$. Let $\cD\cS_A=\cS_A/\cS\cS_A$, its nonzero elements are equivalence classes of deep separations of $X$ with respect to $A$ modulo shallow ones. 

Suppose $\zz/2=\langle \mathbf{1} \rangle$. The next two lemmas from~\cite{BO} relates coarse complementary components of $A$ to $\Hx[1](X-A;\zz/2)$.
\begin{Lemma}[\cite{BO}]\label{l:coarse complement}
The map $(C,X- C)\mapsto d(\mathbf{1}_C)$ induces an isomorphism $\cD\cS_A \to \Hx[1](X-A; \zz/2)$.
\end{Lemma}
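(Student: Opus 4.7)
The plan is to write down the map at the level of subsets, verify it descends to separations and lands in coarse cohomology, and then check surjectivity and compute the kernel using acyclicity of the ambient complex $\C[*](X; \zz/2)$. For a coarse complementary component $C \in \cC_A$, the cochain $d\mathbf{1}_C$ is automatically a cocycle. To see that it lies in $\Cx[1](X-A; \zz/2)$, note that its support in $X^2$ consists of pairs $(x_0, x_1)$ straddling $C$, and such a stabilized simplex $i(x_0, x_1)$ sits within sup-distance $R$ of $\gD$ only if $d(x_0, x_1)$ is bounded by a multiple of $R$. This forces $x_1 \in \partial_{cR} C$ for some constant $c$, and Lemma~\ref{l:complement criterion} then gives $\partial_{cR}C \csubset A$ uniformly in $R$. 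A point $a \in A$ close to $x_1$ is then, by the triangle inequality, also close to $x_0$, so $i(x_0, x_1)$ sits near $\gD_A$. Since $\mathbf{1}_C + \mathbf{1}_{X-C}$ equals the constant-$1$ $0$-cocycle $\mathbf{1}_X$ with $d\mathbf{1}_X = 0$, the cochain $d\mathbf{1}_C = d\mathbf{1}_{X-C}$ depends only on the separation $\{C, X-C\}$.

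Additivity is then the observation that the group law on $\cS$ is symmetric difference, which matches the identity $\mathbf{1}_{C_1 \symdiff C_2} = \mathbf{1}_{C_1} + \mathbf{1}_{C_2}$ modulo $2$, together with the elementary inclusion $\partial_r(C_1 \symdiff C_2) \subset \partial_r C_1 \cup \partial_r C_2$ that keeps us inside $\cC_A$. So we obtain a homomorphism $\cS_A \to \Hx[1](X-A; \zz/2)$.

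For surjectivity I would take a cocycle $\phi \in \Cx[1](X-A; \zz/2)$ and use acyclicity of $\C[*](X; \zz/2)$ to write $\phi = d\psi$ for some $0$-cochain $\psi$; over $\zz/2$ this gives $\psi = \mathbf{1}_D$ for $D := \psi^{-1}(1)$. The claim is that $D \in \cC_A$, which I would establish by running the well-definedness estimate in reverse: any $x \in \partial_r D$ has a neighbor $y \in D$ with $d(x, y) \leq r$, producing a support point $(y, x)$ of $\phi$ near $\gD$, and the defining condition $\Supp{\phi} \ccap \gD \csubset \gD_A$ then forces $x$ to lie uniformly close to $A$. Hence $\partial_r D \csubset A$ for every $r$ and Lemma~\ref{l:complement criterion} applies.

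Finally, to identify the kernel, suppose $d\mathbf{1}_C = d\mathbf{1}_E$ with $\mathbf{1}_E \in \Cx[0](X-A; \zz/2)$. Then $\mathbf{1}_C + \mathbf{1}_E$ is a $0$-cocycle in the acyclic ambient complex, hence a constant $\zz/2$-valued function; this forces $E \in \{C, X-C\}$. Unpacking the support condition, $\mathbf{1}_E \in \Cx[0](X-A; \zz/2)$ is equivalent to $E \csubset A$, so one side of the separation $\{C, X-C\}$ is shallow and the separation itself lies in $\cS\cS_A$; the converse is immediate. The main obstacle I anticipate is the uniform bookkeeping required for the well-definedness step, since the sup-metric support conditions on stabilized simplices in $X^\infty$ must be translated carefully into the $r$-boundary criterion of Lemma~\ref{l:complement criterion}; once that dictionary is in place, the remaining steps reduce to formal manipulations in the acyclic ambient complex over $\zz/2$.
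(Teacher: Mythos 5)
The paper itself gives no proof of this lemma: it is quoted directly from \cite{BO}, so there is nothing in the text to compare your argument against. That said, your proof is the natural direct argument and it is essentially correct: the translation of the support condition $\Supp{d\mathbf{1}_C}\ccap\gD\csubset\gD_A$ into the $r$-boundary criterion of Lemma~\ref{l:complement criterion}, the surjectivity step (writing a coarse cocycle as $d\mathbf{1}_D$ using acyclicity of $\C(X;\zz/2)$ and running the support estimate backwards to get $D\in\cC_A$), and the kernel computation (degree-zero cocycles are constants, and $\mathbf{1}_E\in\Cx[0](X-A;\zz/2)$ iff $E\csubset A$) all go through. The one concrete error is the inclusion $\partial_r(C_1\symdiff C_2)\subset\partial_r C_1\cup\partial_r C_2$: it fails for a point $x\in C_1\cap C_2$ lying within $r$ of $C_1\symdiff C_2$, since such an $x$ belongs to both $C_i$ and hence to neither $\partial_r C_1$ nor $\partial_r C_2$. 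The correct (and sufficient) statement is the coarse one, $\partial_r(C_1\symdiff C_2)\subset N_r(\partial_r C_1)\cup N_r(\partial_r C_2)$: if $x\notin C_1\cup C_2$ the nearby point $y\in C_1\symdiff C_2$ puts $x$ in some $\partial_r C_i$, while if $x\in C_1\cap C_2$ then $y$ itself lies in some $\partial_r C_i$ and $x\in N_r(\partial_r C_i)$; either way $\partial_r(C_1\symdiff C_2)\csubset A$ for all $r$, so Lemma~\ref{l:complement criterion} still applies. Alternatively you can drop this verification altogether, since the paper already quotes from \cite{BO} that $\cS_A$ and $\cS\cS_A$ are subgroups of $\cS$, and the homomorphism property only needs $\mathbf{1}_{C_1\symdiff C_2}=\mathbf{1}_{C_1}+\mathbf{1}_{C_2}$ mod $2$.
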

\begin{Lemma}[\cite{BO}]\label{l:dim}
    Suppose $\{C_{\alpha}\}$ is a pairwise disjoint collection of deep coarse complementary components of $X$ with respect to $A$.
    Then any proper subcollection of $\{C_{\alpha}\}$ maps to a linearly independent subset of $\Hx[1](X-A; \zz/2)$.
\end{Lemma}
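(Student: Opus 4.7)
The plan is to use Lemma~\ref{l:coarse complement}, which gives an isomorphism $\cD\cS_A \cong \Hx[1](X-A;\zz/2)$, to translate the desired linear independence into a statement about separations. Under this identification, the class of $C_\alpha$ maps to $[d(\mathbf{1}_{C_\alpha})]$, so it suffices to show that for a proper subcollection $\{C_{\alpha_i}\}_{i\in I}$ and every nonempty finite $S\subset I$, the element
\[
\sum_{i\in S}\bigl[(C_{\alpha_i},X-C_{\alpha_i})\bigr]\in \cD\cS_A
\]
is nonzero.

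Recall that in $\cS=2^X/\{\emptyset,X\}$ addition is symmetric difference. Since the $C_\alpha$ are pairwise disjoint, the above sum is represented in $\cS$ by the set $U:=\bigcup_{i\in S} C_{\alpha_i}$. So I would verify two things: first, that $U$ is a coarse complementary component of $A$ (so that $(U,X-U)\in \cS_A$); and second, that this separation is deep (so that its class in $\cD\cS_A$ is nonzero).

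For the first point, I would invoke Lemma~\ref{l:complement criterion} and check that $\partial_r U\csubset A$ for all $r$. If $x\in \partial_r U$, then $x\notin U$ and $d(x,C_{\alpha_i})\le r$ for some $i\in S$; since $x\notin C_{\alpha_i}$, we have $x\in \partial_r C_{\alpha_i}$, giving the elementary inclusion
\[
\partial_r U \;\subset\; \bigcup_{i\in S}\partial_r C_{\alpha_i}.
\]
Each $\partial_r C_{\alpha_i}\csubset A$ by Lemma~\ref{l:complement criterion} applied to $C_{\alpha_i}$, and since $S$ is finite and coarse containment is preserved by finite unions, $\partial_r U\csubset A$ as required.

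For the second point, I would show both $U$ and $X-U$ are deep. The set $U$ contains the deep component $C_{\alpha_i}$ for any $i\in S$, so $U\not\csubset A$. This is exactly where the hypothesis that the subcollection is \emph{proper} enters: there exists some $\alpha_0$ in the full collection with $\alpha_0\notin I$. By pairwise disjointness, $C_{\alpha_0}\subset X-U$, and $C_{\alpha_0}$ is deep by assumption, so $X-U\not\csubset A$. Hence the separation $(U,X-U)$ is deep and its class in $\cD\cS_A$ is nontrivial, completing the argument. The only conceptual subtlety is the role of the word ``proper'' — it is what forces the complement of the union to still contain a deep component; the rest is a routine bookkeeping exercise through the dictionary provided by Lemmas~\ref{l:complement criterion} and~\ref{l:coarse complement}.
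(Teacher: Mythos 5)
Your proof is correct. Note that the paper does not actually prove this lemma---it is imported from \cite{BO} without argument---so there is no in-paper proof to compare against; your derivation is the natural one and relies only on the two quoted facts, Lemma~\ref{l:complement criterion} and Lemma~\ref{l:coarse complement}. The checks all go through: for a finite nonempty $S$ the union $U=\bigcup_{i\in S}C_{\alpha_i}$ represents the sum of the separations in $\cS$ by disjointness, the inclusion $\partial_r U\subset\bigcup_{i\in S}\partial_r C_{\alpha_i}$ together with finiteness of $S$ gives $\partial_r U\csubset A$ and hence $U\in\cC_A$ (finiteness is genuinely needed here, but linear independence only ever requires finite sums), depth of $U$ follows from any single deep $C_{\alpha_i}$, and properness of the subcollection is used correctly to place a deep $C_{\alpha_0}$ inside $X-U$, so $(U,X-U)$ is a deep separation and therefore a nonzero class in $\cD\cS_A\cong\Hx[1](X-A;\zz/2)$.
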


Coarse complementary components are particularly easy to study if the underlying space $X$ is a geodesic space. 
For example, in the geodesic setting, coarse complementary components can be identified with a union of path components~\cite[Proposition 5.7]{BO}.
For our purpose, it suffices to assume $X$ is geodesic in a coarse sense.
This motivates the following definitions.

\begin{Definition}
An \emph{$s$-path} between $x$ and $y$ is a finite sequence of points $\{x=x_0,\ldots, x_n=y\}$ so that $d(x_i,x_{i+1})\leq s$. A metric space $X$ is \emph{$s$-path connected} if there is an $s$-path between any two points in $X$. 
An \emph{$s$-path component} of $X$ is a maximal subset of $X$ that is $s$-path connected.
 A metric space $X$ is \emph{uniformly $s$-path connected} if there is a function $\rho:[0,\infty)\to [0,\infty)$ such that if $d(x,y)\leq r$, then there exists an $s$-path of diameter at most $\rho(r)$ between $x$ and $y$.
We say $X$ is \emph{uniformly coarse-path connected} if $X$ is uniformly $s$-path connected for some $s$.
\end{Definition}
\begin{Example}
    We observe that any space that is coarsely equivalent to a uniformly 1-acyclic space is uniformly coarse-path connected. In particular, any uniformly contractible space is uniformly coarse-path connected.
    \end{Example}

The next two lemmas relate $s$-path connectedness to coarse complementary components. 
\begin{Lemma}\label{p:uniform}
    If $X$ is a uniformly $s$-path connected space and $A\subset X$, then for any $r$, the collection of $s$-path components of $X-N_r(A)$ is a  collection of uniform coarse complementary components. 
    In particular, the union of any subcollection of $s$-path components of $X-N_r(A)$ is a coarse complementary component.
\end{Lemma}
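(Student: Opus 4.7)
The plan is to verify the criterion from Lemma~\ref{l:complement criterion}: a subset $C \subset X$ is a coarse complementary component of $A$ iff $\partial_\rho C \csubset A$ for every $\rho \geq 0$. I will produce an explicit bound of the form $\partial_\rho C \subset N_{R(\rho)}(A)$ where $R(\rho)$ depends only on $\rho$, the parameter $r$, and the uniformity function $\rho'$ of uniform $s$-path connectedness, and crucially not on the chosen (union of) $s$-path component(s). This simultaneously establishes the coarse-component property and the uniformity claim.

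Fix $r \geq 0$ and let $\rho' : [0, \infty) \to [0, \infty)$ be the function witnessing uniform $s$-path connectedness of $X$. Let $C$ be an $s$-path component of $X - N_r(A)$ (the argument will apply equally to unions, so I postpone that case). Take $x \in \partial_\rho C$, so $x \in X - C$ and there exists $y \in C$ with $d(x, y) \leq \rho$. If $x \in N_r(A)$, then already $d(x, A) < r$. Otherwise $x \in X - N_r(A)$, and uniform $s$-path connectedness supplies an $s$-path from $x$ to $y$ of diameter at most $\rho'(\rho)$. This path cannot lie entirely inside $X - N_r(A)$: if it did, then $x$ and $y$ would lie in a common $s$-path component of $X - N_r(A)$, but $y \in C$ while $x \notin C$. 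Hence some vertex of the path lies in $N_r(A)$ within distance $\rho'(\rho)$ of $x$, so $d(x, A) \leq \rho'(\rho) + r$. Combining cases yields $\partial_\rho C \subset N_{\rho'(\rho) + r}(A)$ with a bound depending only on $\rho$, $r$, and $\rho'$. By Lemma~\ref{l:complement criterion}, this gives both that each $C$ is a coarse complementary component of $A$ and that the family is \emph{uniform} in the desired sense.

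For the "in particular" statement, the same argument applies verbatim to a union $C = \bigcup_\alpha C_\alpha$ of $s$-path components: if $x \in \partial_\rho C$ and $x \notin N_r(A)$, then $x$ belongs to an $s$-path component of $X - N_r(A)$ not in the chosen subcollection, and any $s$-path from $x$ to a $y \in C$ of diameter $\leq \rho'(\rho)$ must leave $X - N_r(A)$ at some vertex, yielding the same bound $d(x, A) \leq \rho'(\rho) + r$. The only point requiring care is this forced-crossing step, which uses nothing beyond the defining property of $s$-path components. I do not anticipate any substantial obstacle beyond packaging the constants $\rho'(\rho) + r$ to match the precise notion of "uniform coarse complementary components" invoked in later sections.
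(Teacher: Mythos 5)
Your proof is correct and follows essentially the same route as the paper: verify the criterion of Lemma~\ref{l:complement criterion} by taking $x\in\partial_\rho C$ outside $N_r(A)$, using the uniformity function to get an $s$-path to $C$ of controlled diameter, and noting it must enter $N_r(A)$, giving the bound $\rho'(\rho)+r$ independent of the component. Your explicit treatment of unions of components is a minor elaboration of what the paper leaves implicit, but the argument is the same.
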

\begin{proof}
    Fix $r$, and let $C$ be an $s$-path component of $X-N_r(A)$.
    Let $\rho:[0,\infty)\to [0,\infty)$ be the function associated to the uniform $s$-connectedness of $X$.
    By Lemma~\ref{l:complement criterion}, it is enough to show that for any $R$, $\partial_R(C) \subset N_{\rho(R)+r} A$.

    Let $x\in \partial_R(C)$.
    Moreover, assume that $x\notin N_r(A)$, otherwise there is nothing to prove.
    Then there exists $y \in C$ such that $d(x, y)\leq R$.
    Since $x \notin C$ and $y \in C$, $x$ and $y$ are in different $s$-path components of $X-N_r(A)$, thus an $s$-path $\{y=x_0,x_1,\ldots,x_n=x\}$ between  $y$ and $x$ must dip into $N_{r}(A)$. Therefore, there exists $x_i$ on that $s$-path such that $d(x_i, A) < r$.
    Since $X$ is uniformly $s$-connected, we can assume $d(x,x_i)\leq \rho(R)$.
    Then by triangle inequality $d(x, A) \leq d(x, x_i) + d(x_i, A) < \rho(R)+r$. 
    Hence $x\in N_{\rho(R)+r}(A)$. The claim follows.
\end{proof}

\begin{Lemma}\label{l:connected and complementary comp}
    Let $C$ be a coarse complementary component of $A$ in $X$. For each $s$, there exists $p:=p(s,C)$ such that if $M\subset X-N_p(A)$ is  $s$-path connected then
     $M\subset C$ or $M\subset X-C$.
\end{Lemma}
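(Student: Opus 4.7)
The plan is to argue by contradiction, using Lemma~\ref{l:complement criterion} to convert the hypothesis that $C$ is a coarse complementary component of $A$ into concrete quantitative control on the boundary of $C$. Specifically, since $C$ is a coarse complementary component, that lemma guarantees $\partial_s C \csubset A$; so there exists a constant $R = R(s, C)$ such that
\[
\partial_s C \subset N_R(A).
\]
I would then set $p := R + 1$ (any $p > R$ works) and claim this $p$ does the job.

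Suppose for contradiction that $M \subset X - N_p(A)$ is $s$-path connected but meets both $C$ and $X - C$. Pick $x \in M \cap C$ and $y \in M \cap (X - C)$, and choose an $s$-path $x = x_0, x_1, \ldots, x_n = y$ lying inside $M$. Since $x_0 \in C$ and $x_n \notin C$, there is a first index $i$ with $x_i \in C$ and $x_{i+1} \in X - C$. Then $x_{i+1} \in X - C$ satisfies $d(x_{i+1}, C) \leq d(x_{i+1}, x_i) \leq s$, so by definition $x_{i+1} \in \partial_s C$, and hence $x_{i+1} \in N_R(A)$.

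On the other hand, $x_{i+1} \in M \subset X - N_p(A)$ means $d(x_{i+1}, A) \geq p > R$, contradicting $x_{i+1} \in N_R(A)$. Therefore $M$ cannot meet both $C$ and $X - C$, which is what we wanted to prove. There is no real obstacle here: the content is already packaged in Lemma~\ref{l:complement criterion}, and the proof is essentially a one-paragraph unwinding of the definitions, with the only minor care being to verify that the jump from $C$ into $X - C$ along the $s$-path lands in $\partial_s C$ rather than in $C$ itself (which is immediate from the definition of $\partial_s C$ since the offending vertex lies in $X - C$).
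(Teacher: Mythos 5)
Your proof is correct and follows essentially the same route as the paper's: both invoke Lemma~\ref{l:complement criterion} to get $\partial_s C\subset N_R(A)$ and then observe that an $s$-path in $M\subset X-N_p(A)$ cannot cross from $C$ into $X-C$, since the first point after the crossing would lie in $\partial_s C$ and hence too close to $A$. The paper phrases this as an induction along the path rather than a contradiction at the first crossing, but the content and the choice of $p$ are the same.
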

\begin{proof}
Since $C$ is a coarse complementary component of $A$, there exists $s'$ such that $\partial_s(C)\subset N_{s'}(A)$ by Lemma~\ref{l:complement criterion}.

Let $p:=s'+s$. If $M\subset X-C$, then we are done.
Otherwise, suppose $M\cap C\neq \emptyset$. We claim that $M\subset C$.

Pick $x\in M\cap C$. Let $y\in M$. We want to show $y\in C$.
Since $M$ is $s$-path connected,
 we can choose points $\{x=x_0, \dots, x_n=y\}$ in $M$ so that $d(x_i, x_{i+1}) \leq s$ for all $i$.
 Since $M\subset X-N_p(A)$, $d(x_i,A)>p$ for all $i$.
If $x_i\in C$, then $d(x_{i+1}, A) \geq d(x_i, A)-d(x_i, x_{i+1}) > p-s \geq s'$, so $x_{i+1} \notin N_{s'}(A)$, and therefore $x_{i+1} \notin \partial_s(C)$.
    On the other hand, $d(x_{i+1}, C) \leq d(x_i, x_{i+1}) \leq  s$.
    Combining these together, we get $x_{i+1}\in C$.
    So by induction, $y\in C$.
\end{proof}

\subsection{Coarse $\mathrm{PD}(n)$ spaces}
Coarse $\mathrm{PD}(n)$ spaces are the coarse version of manifolds, in the sense that they admit a coarse version of Poincare duality. 
Let us now recall the definition of a coarse $\mathrm{PD}(n)$ space from~\cite{BO}.

\begin{Definition}\label{def pdn}
    A metric space $X$ is a \emph{coarse $\mathrm{PD}(n)$ space}, if there exist chain maps $p: \C(X;\zz) \to \cx[n-*](X;\zz)$ and $q: \cx[n-*](X;\zz) \to \C(X;\zz)$, so that $pq$ and $qp$ are chain homotopic to identities via chain homotopies $G:\cx(X;\zz) \to \cx[*+1](X;\zz)$ and $F: \C(X;\zz) \to \C[*-1](X;\zz)$ which are controlled:
    \begin{align*}
        \forall \phi\in \C(X;\zz) &\qquad \Supp{ p(\phi)} \csubset \Supp{\phi}, \\
        \forall \phi\in \C(X;\zz) &\quad \Supp{ F(\phi)} \ccap \gD \csubset \Supp{\phi} , \\
        \forall c \in \cx(X;\zz) &\quad \Supp{q(c)} \ccap \gD \csubset \Supp{c}, \\
        \forall c\in \cx(X;\zz) &\qquad \Supp{ G(c)} \csubset \Supp{c}.
    \end{align*}
\end{Definition} 

\begin{Example}
    Any proper, uniformly acyclic $n$-manifold is a coarse $\mathrm{PD}(n)$ space~\cite[Corollary 8.3]{BO}. In particular, the universal cover of a closed, aspherical $n$-manifold is a coarse $\mathrm{PD}(n)$ space. 
\end{Example}
Let $\delta(X)$ be the diagonal subset of $X\times X$, and define $\C(X\otimes X):=\Hom{(\cf(X)\otimes \cf(X);\zz)}$. The cohomology $\Hx[n](X\otimes X-\delta(X))$ is the cohomology of the following cochain complex
\[
\Cx(X\otimes X-\gd(X)):=\{\phi\in \C(X\otimes X)\mid \Supp{\phi}\ccap \gD_{X\times X} \csubset \gD_{\gd(X)}\}.
\]
 This complex is chain homotopy equivalent to $\Cx(X\times X-\gd(X))$ via coarsely support preserving homotopies (see~\cite[Section 7]{BO}). One advantage of working with  $\C(X\otimes X)$ rather than $\C(X\times X)$,  is that it offers a more convenient setting for defining slant products.

 Suppose $\phi\in \C[n](X\otimes X)$ and $\gs^k\in \cf[k](X)$ is a $k$-simplex.
The slant product $\phi/\gs\in \C[n-k](X)$ is defined by
\[
(\phi/\gs^k)(\gt^{n-k}) = \phi(\gs \otimes \gt).
\]

  We now recall the coarse Alexander duality theorem from~\cite[Theorem 6.2]{BO}.
\begin{Theorem}[Coarse Alexander duality]\label{CAD}
 Suppose $X$ is a coarse $\mathrm{PD}(n)$ space. Then for any $A\subset X$ and for any finitely generated abelian group $G$, we have
 \[
 \Hx[k](X-A;G)\cong \hx[n-k](A;G).
 \]

Furthermore, there exists a class $U\in \Hx[n](X\otimes X-\delta(X))$ such that the slant product with $U$ gives the above isomorphism from $\hx[n-k](A)$ to $\Hx[k](X-A)$.
\end{Theorem}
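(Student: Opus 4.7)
The plan is to construct the fundamental class $U$ directly from the coarse $\mathrm{PD}(n)$ chain maps, and then to realize the duality isomorphism as a chain-level equivalence between the relative complexes $\Cx(X-A;G)$ and $\cx[n-*](A;G)$, inherited by restriction from the global equivalence $\C(X;\zz)\simeq \cx[n-*](X;\zz)$ guaranteed by Definition~\ref{def pdn}. For the class $U$, I would apply $p$ to the indicators of simplices: in bidegree $(a,n-a)$, set
\[
U(\gs^a\otimes \gt^{n-a}):=\langle p(\gd_\gs),\,\gt^{n-a}\rangle,
\]
where $\gd_\gs\in \C[a](X;\zz)$ is the indicator of $\gs^a$. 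The controlledness $\Supp{p(\phi)}\csubset \Supp{\phi}$ translates directly into $\Supp{U}\ccap \gD_{X\times X}\csubset \gD_{\gd(X)}$, so $U\in \Cx[n](X\otimes X-\gd(X);\zz)$, and the chain-map identity $pd=\partial p$ makes $U$ a coarse cocycle.

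Next I would verify that the slant product $c\mapsto U/c$, extended bilinearly from simplices to chains, defines a chain map $\cx[n-k](A;G)\to \Cx[k](X-A;G)$. Local finiteness of $c$ combined with the support hypothesis on $U$ ensures $U/c$ is a well-defined cochain, and comparing $\Supp{c}\csubset \gD_A$ against $\Supp{U}\ccap \gD_{X\times X}\csubset \gD_{\gd(X)}$ yields $\Supp{U/c}\ccap \gD\csubset \gD_A$, placing $U/c$ in $\Cx[k](X-A;G)$. Passing to (co)homology gives the desired slant map $\hx[n-k](A;G)\to \Hx[k](X-A;G)$. For the inverse, I would symmetrically use $q$ to build a backward map, and then invoke the two controlled chain homotopies $F$ and $G$ from the PD structure to show that the compositions are chain homotopic to the identity on the relative complexes; crucially, the controlledness hypotheses on $F,G$ were tailored so that the homotopies preserve the coarse-support conditions cutting out $\Cx(X-A;G)$ and $\cx[n-*](A;G)$, and therefore descend to the relative complexes.

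The step I expect to be the main obstacle is the bookkeeping of stabilized supports throughout. Each slant product and each application of $p$, $q$, $F$, or $G$ produces terms whose stabilized supports must be tracked simultaneously and certified to remain coarsely contained in $\gD_A$ or $\gD_{\gd(X)}$. Each individual containment follows from one of the four inequalities in Definition~\ref{def pdn} together with the support hypothesis on $U$, but the compositions generate cross-terms and one must check carefully that the coarse intersection $\ccap$ behaves compatibly under the stabilization map $i$, in particular that coarse containments of the unstabilized supports transfer to coarse containments of the stabilized supports after intersecting with the diagonal. A related subtlety is identifying $\cx[n-*](A;G)$ with the subcomplex of $\cx[n-*](X;G)$ consisting of chains with stabilized support coarsely in $\gD_A$, which I would handle by a controlled retraction argument using how $A$ sits inside $X$.
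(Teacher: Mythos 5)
The paper itself contains no proof of Theorem~\ref{CAD}: it is quoted from \cite[Theorem 6.2]{BO}, and the ``furthermore'' clause about the class $U$ is exactly what the authors later extract from (the proof of) \cite[Theorem 8.2]{BO} in Lemma~\ref{pdn is coarse connected}. So your proposal can only be measured against the argument in \cite{BO}, whose broad outline you do share: restrict the controlled chain homotopy equivalence of Definition~\ref{def pdn} to the subcomplexes cut out by the coarse support conditions, identify chains of $X$ coarsely supported on $\gD_A$ with $\cx(A)$ by a controlled projection, and realize the resulting isomorphism as a slant product with a duality class.

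However, your construction of $U$ has a genuine error. The ``matrix of $p$'', $U(\gs\otimes\gt):=\langle p(\gd_\gs),\gt\rangle$, is not a cocycle in $\Cx[n](X\otimes X-\gd(X))$. Indeed $dU(\gs\otimes\gt)=U(\partial\gs\otimes\gt)\pm U(\gs\otimes\partial\gt)$, and the chain-map identity for $p$ only relates $\partial\, p(\gd_\gs)$ to $p(d\gd_\gs)$, where $d\gd_\gs$ is an infinite sum of indicators over \emph{cofaces} of $\gs$; it says nothing about $p(\gd_{\partial_i\gs})$, so the two terms of $dU$ have no reason to cancel. The kernel whose closedness is equivalent to a chain-map identity is the one attached to $q$ (chains~$\to$~cochains): setting $U(\gs\otimes\gt):=q(\gs)(\gt)$ (finite chains lie in $\cx$, so this is defined), $dU=0$ is precisely $q(\partial\gs)=\pm\, d\,q(\gs)$, and $U/c=q(c)$ is then the duality map $\hx[n-k](A)\to\Hx[k](X-A)$, with $p$ --- up to controlled homotopy the cap product with a coarse fundamental class $c\in\cx[n](X)$ --- furnishing the inverse; this is exactly the division of labor in \cite{BO} (and in fact there the logic runs the other way: $U$ and $c$ are constructed first and $q,p$ are defined from them). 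In your write-up the roles are reversed, and with your $U$ the assignment $c\mapsto U/c$ is not even a chain map, so the whole second half of the argument does not get started. A second, smaller gap: the support conditions in Definition~\ref{def pdn} are stated per element ($\csubset$ allows the constant to depend on $\phi$ or $c$), whereas both the membership $U\in\Cx[n](X\otimes X-\gd(X))$ and the local finiteness of $U/c$ for locally finite $c$ require \emph{uniform} control over all simplices; this is not automatic from the definition (the structure maps need not commute with infinite sums) and is obtained in \cite{BO} by choosing good representatives, so ``the controlledness translates directly'' is not yet a proof. Your final point about identifying $\cx(A)$ with chains coarsely supported on $\gD_A$ via a controlled projection is fine and is indeed needed.
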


The following is a corollary of the above theorem~\cite[Corollary 6.3]{BO}.
\begin{Corollary}\label{c:pdn}
    Suppose $X$ is a coarse $\mathrm{PD}(n)$ space, and $G$ be a finitely generated abelian group.
    Then:
        \item\label{i:hx of PD(n)}

        $$ \Hx[*](X;G)=\hx[*](X; G)=
        \begin{cases}
            G & *=n,\\
            0 & \text{otherwise}.
        \end{cases}$$
        
\end{Corollary}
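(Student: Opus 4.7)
The plan is to derive both computations directly from coarse Alexander duality (Theorem~\ref{CAD}), applied at two extreme choices of the subset $A\subset X$: namely $A=\{x_0\}$ for the cohomology statement and $A=X$ for the homology statement. In each case the work reduces to identifying the complex $\Cx[*](X-A;G)$ with one whose cohomology is already known, and then reading off the other side of the duality.

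For $\Hx[*](X;G)$, I would take $A=\{x_0\}$, a single point. Since $\gD_{\{x_0\}}$ is then a single point of the diagonal, the condition $\Supp{\phi}\ccap \gD \csubset \gD_{\{x_0\}}$ defining $\Cx[*](X-\{x_0\};G)$ is equivalent to $\Supp{\phi}\ccap \gD \ceq *$, which is the condition defining $\Cx[*](X;G)$. Hence $\Hx[k](X-\{x_0\};G)=\Hx[k](X;G)$. On the dual side, the coarse chain complex $\cx[*](\{x_0\};G)$ is just $G$ in each non-negative degree with the alternating boundary, so $\hx[0](\{x_0\};G)=G$ and $\hx[j](\{x_0\};G)=0$ for $j\neq 0$. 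Theorem~\ref{CAD} then yields the claimed pattern for $\Hx[*](X;G)$.

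For $\hx[*](X;G)$, I would take $A=X$. Then $\gD_X=\gD$, so the condition $\Supp{\phi}\ccap \gD \csubset \gD_X$ is automatic, and $\Cx[*](X-X;G)$ collapses to the full (non-coarse) cochain complex $\C[*](X;G)$. As noted just after its definition in the paper, this complex is acyclic (cohomology is $G$ in degree $0$ and vanishes in positive degrees); a contracting cochain homotopy is produced by fixing a basepoint and coning off. Theorem~\ref{CAD} then gives $\hx[n-k](X;G)\cong \Hx[k](X-X;G)$, which is $G$ exactly when $k=0$ and zero otherwise---equivalent to the desired vanishing pattern for $\hx[*](X;G)$. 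The vanishing in degrees $j>n$ corresponds to $k<0$ on the cohomology side, where the coarse cochain complex is zero by convention.

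Neither step poses a real obstacle once Theorem~\ref{CAD} is in hand: the whole argument is just a matter of unwinding the definitions of $\Cx[*](X-A;G)$ at these two extremes and invoking the duality. The mildest subtlety, if any, is the acyclicity of $\C[*](X;G)$, which is standard and is in fact already asserted in the text where the complex is introduced.
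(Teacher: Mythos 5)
Your argument is correct, and since the paper gives no proof of this corollary but simply quotes it from \cite{BO}, there is nothing to diverge from: specializing Theorem~\ref{CAD} to $A=\{x_0\}$ (where $\Cx(X-A;G)=\Cx(X;G)$ and $\hx(\{x_0\};G)$ is $G$ concentrated in degree $0$) and to $A=X$ (where $\Cx(X-X;G)=\C(X;G)$ is acyclic) is exactly the intended derivation. The only point worth recording is the one you already flag: the vanishing of $\hx[j](X;G)$ for $j>n$ rests on the convention that the cochain complexes vanish in negative degrees, or can be obtained directly from the controlled chain homotopy equivalence between $\cx[n-*](X)$ and $\C(X)$ built into Definition~\ref{def pdn}.
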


As mentioned before, any proper, uniformly acyclic $n$-manifold is a coarse $\mathrm{PD}(n)$ space.
The next lemma can be thought as a partial converse to this.

\begin{Lemma}\label{pdn is coarse connected}
    If $X$ is a coarse $\mathrm{PD}(n)$ space, then $X$ is uniformly coarse-path connected. 
\end{Lemma}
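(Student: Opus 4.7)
The plan is to combine the vanishing of $\hx[0](X;\zz/2)$ coming from Corollary~\ref{c:pdn} with the uniformly support-controlled chain homotopy $G$ from the coarse $\mathrm{PD}(n)$ data in Definition~\ref{def pdn} to manufacture, between any two points $x,y\in X$, an $s$-path of diameter bounded in terms of $d(x,y)$.

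First I would dispose of the trivial case $n=0$: Corollary~\ref{c:pdn} forces $\hx[*](X;\zz)=0$ for $*>0$ and $\hx[0](X;\zz)=\zz$, which makes $X$ coarsely a point, hence bounded, and the statement is immediate with $s=\diam(X)$. From now on assume $n\geq 1$, so that $\hx[0](X;\zz/2)=0$.

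For $x,y\in X$ I would regard $c=y-x$ as a $0$-cycle in $\cx[0](X;\zz/2)$. By $\hx[0]=0$ there is $b\in\cx[1](X;\zz/2)$ with $\partial b=c$, and the condition $\Supp{b}\csubset\gD$ gives a bound $s=s(b)$ on the edge lengths of $b$. Viewing $\Supp{b}$ as a locally finite graph in which $x$ and $y$ are the only vertices of odd degree (by $\partial b=x+y$ over $\zz/2$), a standard graph-theoretic argument inside the finite connected subgraph joining $x$ to $y$ extracts an $s$-path from $x$ to $y$ contained in $\Supp{b}$; this already gives coarse-path connectedness of $X$. To make the edge-length bound $s$ uniform (independent of $b$, hence of $x,y$), I would apply the chain homotopy $G:\cx\to\cx[*+1]$ to $c$: the support condition $\Supp{G(c)}\csubset\Supp{c}$ has a uniform coarse-containment constant $K$ depending only on the $\mathrm{PD}(n)$ data of $X$, which forces every $1$-simplex of $G(c)$ to have both endpoints within $K$ of $x$ or both within $K$ of $y$, and in particular edge length at most $2K$. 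Using the homotopy relation $\partial G(c)=pq(c)-c$, the modified chain $b+G(c)$ has boundary $pq(c)$, a $0$-chain that lies in a uniform $K$-neighborhood of $\{x,y\}$ by the support conditions on $p$ and $q$. This should let me take $s=2K$ once and for all, and replace the global chain $b$ by a chain supported near $\{x,y\}$.

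The main obstacle, and the step that I expect to be the hardest, is the diameter bound $\rho(d(x,y))$: neither $\hx[0]=0$ alone nor $G$ alone controls how far the bounding chain can stray from $\{x,y\}$. My plan for this step is to iterate the chain homotopy, replacing $c$ by $pq(c),\;(pq)^2(c),\dots$, and at each stage absorb the discrepancy into a chain $G((pq)^k(c))$ whose support stays in a uniformly enlarged neighborhood of $\{x,y\}$; the goal is to show that the union of these chains gives a single bounding $1$-chain in a neighborhood of $\{x,y\}$ of radius linear in $d(x,y)+K$, from which the graph-theoretic extraction then yields an $s$-path of the desired diameter. The delicate point is that the iteration must respect all four support conditions on $p$, $q$, $F$, $G$ in Definition~\ref{def pdn} simultaneously, and one must verify that the supports of $(pq)^k(c)$ do not drift to infinity; this is where the precise compatibility of the coarse $\mathrm{PD}(n)$ data is used.
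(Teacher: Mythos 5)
There is a genuine gap, and it sits exactly where you predicted: the diameter bound. Your scheme applies $pq$ and $G$ to the $0$-chain $c=y-x$, but the support conditions then trap everything near $\{x,y\}$: each chain $G((pq)^k(c))$ and each $0$-chain $(pq)^k(c)$ lies coarsely in a neighborhood of $\{x,y\}$, and the boundaries telescope to $\partial\bigl[\sum_{k<m} G((pq)^k(c))\bigr]=(pq)^m(c)-c$. Nothing in the coarse $\mathrm{PD}(n)$ data makes the residual $(pq)^m(c)$ vanish or forces the mass accumulating near $x$ to ever join the mass near $y$; the only object in your argument that actually connects $x$ to $y$ is the global chain $b$ coming from $\hx[0](X;\zz/2)=0$, whose support and edge lengths are uncontrolled, and the iteration never transfers that connection into a controlled neighborhood. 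In short, the iteration does not converge to (or terminate at) a bounding $1$-chain with boundary $y-x$ supported near $\{x,y\}$, so the function $\rho$ is never produced. A secondary issue is your assertion that the containment $\Supp{G(c)}\csubset\Supp{c}$ holds with a constant ``depending only on the $\mathrm{PD}(n)$ data'': Definition~\ref{def pdn} is stated chain by chain, so this uniformity is itself something to prove, not to quote.

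The paper's proof avoids the iteration entirely by feeding the $1$-simplex $\sigma=(x,y)$ itself (not the $0$-cycle $y-x$) into the duality data: the $1$-chain $pq(\sigma)-G(\partial\sigma)$ has boundary $\partial\sigma$, and one shows (a) it lies in $N_s(\gD)$ for a uniform $s$, so all its edges are short, and (b) it lies in $N_{\rho(\diam\sigma)}(\Supp{\sigma})$, which is precisely the missing diameter control. Both uniformities are obtained not from the axioms alone but by choosing the explicit representatives of $p$, $q$, $G$ from the construction in \cite{BO} ($p$ a cap product with a coarse fundamental cycle, $q$ a slant product with the duality class $U$, and $G$ a projection composed with an explicit homotopy $H$), for which the support control is uniform by inspection. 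If you want to salvage your plan, the repair is essentially to abandon the $0$-chain iteration and adopt this one-step use of $pq$ on the long $1$-simplex, together with the explicit representatives to justify uniform constants; your graph-theoretic extraction of an $s$-path from a finite $1$-chain with boundary $y-x$ is fine and is implicitly what the paper does at the end.
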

\begin{proof}
Since $X$ is a coarse $\mathrm{PD}(n)$ space, by definition there exist chain maps $p: \C(X;\zz) \to \cx[n-*](X;\zz)$ and $q: \cx[n-*](X;\zz) \to \C(X;\zz)$, so that $pq$ is chain homotopic to identity via a chain homotopy $G:\cx(X;\zz) \to \cx[*+1](X;\zz)$ which are controlled.

Let $\sigma$ be a $1$-simplex (recall that an $1$-simplex is a pair of points). 
We note that
\begin{align*}
    \partial [pq(\sigma)-G(\partial \sigma)]=\partial [\sigma+\partial G(\sigma)]=\partial \sigma
\end{align*}
In other words, $pq(\sigma)-G(\partial \sigma)$ gives a 1-chain that `connects' the two vertices of $\sigma$. 
Moreover, if $|pq(\sigma)-G(\partial \sigma)|\subset N_s(\Delta)$, then all the 1-simplices in the support of $pq(\sigma)-G(\partial \sigma)$ have diameter $\leq s$.
If $|pq(\sigma)-G(\partial \sigma)|\subset N_r(|\gs|)$ for some $r$, then the support of the $1$-chain $pq(\sigma)-G(\partial \sigma)$ remains within the $r$-neighborhood of its vertices of $\sigma$. In particular, $|pq(\sigma)-G(\partial \sigma)|$ will be bounded in this case, and since $pq(\sigma)-G(\partial \sigma)\in \cx[1](X)$, it will follow that $pq(\sigma)-G(\partial \sigma)$ is a finite $1$-chain.

Therefore, it is enough to prove the following two properties of $pq(\sigma)-G(\partial \sigma)$: Firstly, there exists an $s$ such that $|pq (\sigma)-G(\partial \sigma)|\subset N_s(\Delta)$ for all 1-simplices  $\sigma$. Secondly, there exists a function $\rho:\rr\to \rr$ such that $|pq(\sigma)-G(\partial \sigma)|\subset N_{\rho(\diam(\sigma))}(\sigma)$ for all 1-simplices $\sigma$, where $\diam(\sigma)$ is the distance between two vertices in $\sigma$.

It suffices to prove the properties for $pq(\sigma)$ and $G(\partial \sigma)$ separately.
Both properties can be proved by choosing good representatives of $p,q$ and $G$.
By (the proof of)~\cite[Theorem 8.2]{BO}, there exist $c\in \cx[n](X)$ and $U\in \Cx[n](X\otimes X-\delta(X))$, such that $p$ is cap product with $c$ and $q$ is a slant product with $U$ (up to sign).
It follows that $pq(\sigma)$ satisfies both the properties.
It remains to show that $G(\partial \sigma)$ satisfies the properties.

To this end, we recall from the proof of \cite[Theorem 8.2]{BO} that the homotopy $G$ is composition of two maps.
The first one is the projection map $p_{1_*}:\cf(X\otimes X)\to \cf(X)$, $\tau_1\otimes \tau_2\to \tau_1$.
The second one is the homotopy $H$ between $\sigma\mapsto U \capp (c\otimes \sigma)$ and $\sigma\mapsto T^*U \capp (\sigma\otimes c)$ where $T$ is the involution map $\C(X\otimes X)\to \C(X\otimes X)$ as in~\cite[Lemma 7.4(3)]{BO}.
From the proof of~\cite[Lemma 7.4(3)]{BO}, it follows that $H$ is a homotopy between $\sigma\to T_*(U\capp (c\otimes \sigma))$ and $\sigma \to U\capp (c\otimes \sigma))$, where $T_*:\cf(X\otimes X)\to \cf(X\otimes X)$ is the involution map $\tau_1\otimes \tau_2\to \tau_2\otimes \tau_1$ up to sign.
It follows that there exists a function $\rho:\rr\to \rr$ such that $|H(\sigma)|\subset N_{\rho(\diam(\sigma))}(\Delta)$ and $|H(\sigma)|\subset N_{\rho(\diam(\sigma))}(\sigma)$ for all $\sigma$.
Consequently there exists $s$ such that $|H(\sigma)|\subset N_{s}(\Delta)$ for all $0$-simplices.
Since $G=p_{1_*}H$,  the same properties hold for $G$.
\end{proof}
\subsection{Separating $\mathrm{PD}(n)$ spaces by $\mathrm{PD}(n{-}1)$ spaces}
The following lemma collects several important properties of coarse complementary components of a coarse $\mathrm{PD}(n{-}1)$ space coarsely embedded in a coarse $\mathrm{PD}(n)$ space.

\begin{Lemma}\label{l:coarse complement for PDn}
   Let $X$ be a coarse $\mathrm{PD}(n)$ space. Let $A$ be a coarse $\mathrm{PD}(n-1)$ space which is coarsely embedded in $X$.
   Then
   \begin{enumerate}
       \item  $\Hx[1](X-A;\zz)=\zz$. Furthermore, there exists a coarse complementary component $C$ of $A$ in $X$ such that the class of $d(1_C)$ generates $\Hx[1](X-A;\zz)$. 
\item If $C$ is a deep coarse complementary component of $A$ in $X$, then $A\csubset C$.
       
       \item If $d(1_M)$ and $d(1_N)$ represent the same nontrivial class in $\Hx[1](X-A;\zz)$, then $M\ceq N$.
        \item Suppose  that $(M,X-M)$ is a deep separation of $X$ with respect to $A$. 
Then, for any $r$ the union of shallow $s$-path components of $M-N_r(A)$ is shallow: they all are contained in $N_{R}(A)$ for some $R$.
        Moreover, for all $r$, $M-N_r(A)$ contains exactly one deep $s$-path connected component.
         
   \end{enumerate}

   \begin{proof}
       \begin{enumerate}
           \item \noindent By Theorem~\ref{CAD}, we have $\Hx[1](X-A;\zz)=\hx[n-1](A;\zz)$ and by Corollary~\ref{c:pdn}, we have $\hx[n-1](A;\zz)=\zz$. 
           Consequently, $\Hx[1](X-A;\zz)=\zz$.
           With $\zz/2$-coefficients, we similarly have $\Hx[1](X-A;\zz/2)=\zz/2$. 
           Let $q:\zz\to \zz/2$ be the surjective map.
            Lemma~\ref{l:coarse complement} implies that there exists a deep coarse complementary component $C$ of $A$ in $X$ such that $[d(q\circ 1_C)]$ generates $\Hx[1](X-A;\zz/2)$.
           Note that $q$ induces a map $\Hx[1](X-A;\zz)\to \Hx[1](X-A;\zz/2)$ and under this map $[d(1_C)]$ goes to $d(q\circ 1_C)$. It follows that $d(1_C)$ is nontrivial element in $\Hx[1](X-A;\zz)$. 
           We claim that $[d(1_C)]$ must be a generator of $\Hx[1](X-A;\zz)$.
           First observe that $C\neq X$, otherwise $d(1_C)$ would be trivial. 
           Since the complex $\C(X)$ is acyclic, there exists $f\in \C[0](X)$ such that $[d(f)]$ generates $\Hx[1](X-A;\zz)$. 
           Therefore, $[d(1_C)]=m\cdot[d(f)]$ for some non-zero integer $m$.
           This means there exists $g\in \Cx[0](X-A;\zz)$ such that 
          $1_C-mf-g$ is some constant function $x\mapsto k$ for some $k\in \zz$. 
          Since $g\in \Cx[0](X-A;\zz)$, by definition $|g|\csubset A$.
           Since $|g|\csubset A$ 
           and both $C$ and $X- C$ are not coarsely contained in $A$, we can find $x\in C$ and $y\in X- C$ such that $g(x)=g(y)=0$.
           Applying $1_C-mf-g$ on $x$ and $y$, we obtain that $1-mf(x)=k=mf(y)$.
           In other words, both $k$ and $k-1$ must be divisible by $m$. This implies $|m|=1$. The claim follows.
           \\

           \item  
This is proved in~\cite[Corollary 6.5(1)]{BO}.
\\           
\item First observe that $d(1_M)\in \Cx[1](X-A;\zz)$ implies that $M$ is a coarse complementary component. 
Since $[d(1_M)]$ is a nontrivial class, it follows from the proof of the first part of the lemma that $d(1_M)$ represents the generator of $\Hx[1](X-A,\zz)$.
Under the surjective map $q:\zz\to \zz/2$, $d(q\circ 1_M)$ represents the non-trivial class in $\Hx[1](X-A;\zz/2)$.
Since $\Hx[1](X-A;\zz/2)=\zz/2$,
by Lemma~\ref{l:coarse complement}, we have $\cD\cS_{A}=\zz/2$.
In particular, $(M,X-M)$ represents the unique nontrivial element in $\cD\cS_{A}$.
The same is true for $(N,X-N)$.
So, $(M,X-M)$ and $(N,X-N)$ represent the same class in $\cD\cS_{A}$.
Either, $M\symdiff N\csubset A$ or $M\symdiff (X-N)\csubset A$.

Suppose, $M\symdiff N\csubset A$. By (2), we have $A\csubset M$ and $A\csubset N$. It follows that $M\ceq N$. Now, suppose, $M\symdiff (X-N)\csubset A$.
 Then $|1_M-1_{X-N}|\csubset A$, and hence $1_M-1_{X-N}\in \Cx[0](X-A;\zz)$.
 In other words, $[d(1_M-1_{X-N})]$ represents the trivial class and therefore $[d(1_M)]=[d(1_{X-N})]$ .
Since, $d(1_{X-N})=-d(1_N)$, we obtain $[d(1_M)]=-[d(1_N)]$ which is a contradiction.
 \\
 \item
Let $M$ be a coarse complementary component of $A$ in $X$.
If, for some $r$, the union of shallow path components is not shallow, then there is a sequence $\{C_i\}$ of shallow path components of $X-N_r(A)$ such that $C_i$ is not contained in $N_i(A)$ for all $i\in \nn$.
By Lemma~\ref{pdn is coarse connected}, $X$ is uniformly coarse-path connected.
    Therefore the union of any subsequence is a deep coarse complementary component by Lemma~\ref{p:uniform}, and since there are infinitely many subsequences whose unions are pairwise disjoint, $\dim_{\zz/2} \Hx[1](X-A; \zz/2)$ is infinite by Lemma~\ref{l:dim}.

Since $M$ is a deep coarse complementary component, it follows that $M-N_r(A)$ has at least one deep $s$-path component for any $r$.
 By Lemma~\ref{l:dim} for any $r$, there exists at most one deep $s$-path components in $M-N_r(A)$.
 The claim follows.
\end{enumerate}
   \end{proof}
\end{Lemma}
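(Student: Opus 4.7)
The plan is to combine Coarse Alexander Duality (Theorem~\ref{CAD}) with the dictionary between deep coarse complementary components and $\Hx[1](X-A;\zz/2)$ given by Lemmas~\ref{l:coarse complement} and~\ref{l:dim}, together with the uniform coarse-path connectedness of $X$ supplied by Lemma~\ref{pdn is coarse connected}. These tools will simultaneously control the algebraic and geometric structure of how $A$ separates $X$.

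For part (1), I would first apply Theorem~\ref{CAD} together with Corollary~\ref{c:pdn} to obtain $\Hx[1](X-A;\zz) \cong \hx[n-1](A;\zz) \cong \zz$, and in parallel $\Hx[1](X-A;\zz/2) \cong \zz/2$. By Lemma~\ref{l:coarse complement}, the nonzero class in $\Hx[1](X-A;\zz/2)$ is represented by $d(1_C)$ for some deep coarse complementary component $C$. Mod-$2$ reduction is natural, so $[d(1_C)] \in \Hx[1](X-A;\zz)$ is already nonzero. To promote it to a generator, I write $[d(1_C)] = m[d(f)]$ for a generator $[d(f)]$, which gives $1_C - mf - g = k$ for a constant $k$ and some $g$ with $\Supp{g} \csubset A$. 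Since both $C$ and $X-C$ are deep, one can pick $x \in C$ and $y \in X-C$ far enough from $A$ that $g(x) = g(y) = 0$; evaluating forces $m$ to divide $1$, hence $m = \pm 1$.

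Part (2) I would cite directly from~\cite[Corollary 6.5(1)]{BO}. For part (3), if $[d(1_M)]$ and $[d(1_N)]$ represent the same nontrivial class in $\Hx[1](X-A;\zz)$, then after mod-$2$ reduction both $(M,X-M)$ and $(N,X-N)$ determine the unique nontrivial element of $\cD\cS_A$; by Lemma~\ref{l:coarse complement}, this forces either $M \symdiff N \csubset A$ or $M \symdiff (X-N) \csubset A$. The second case would give $[d(1_M)] = -[d(1_N)]$ integrally, hence $2[d(1_N)] = 0$, which is impossible in $\zz$. So $M \symdiff N \csubset A$, and combining with part (2) applied to both $M$ and $N$ yields $M \ceq N$.

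For part (4), I would argue by contradiction: if for some $r$ the shallow $s$-path components of $M - N_r(A)$ are not uniformly close to $A$, pick shallow components $C_i$ with $C_i \not\subset N_i(A)$. Lemma~\ref{p:uniform}, applicable because of Lemma~\ref{pdn is coarse connected}, guarantees that any union of these components is a coarse complementary component of $A$. Taking infinitely many pairwise disjoint such unions and applying Lemma~\ref{l:dim} produces infinitely many linearly independent classes in $\Hx[1](X-A;\zz/2)$, contradicting $\Hx[1](X-A;\zz/2) \cong \zz/2$. The existence of at least one deep $s$-path component of $M - N_r(A)$ follows from $M$ being deep, and Lemma~\ref{l:dim} prevents having two. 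The most delicate step throughout is the integrality argument in part (1), where $m = \pm 1$ must be extracted by exploiting the $\{0,1\}$-valued nature of $1_C$ together with the existence of deep points in both $C$ and $X-C$.
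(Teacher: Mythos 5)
Your proposal is correct and follows essentially the same route as the paper: Coarse Alexander Duality plus Corollary~\ref{c:pdn} for the computation, Lemma~\ref{l:coarse complement} over $\zz/2$ to produce the deep component $C$, the same divisibility trick (evaluating $1_C - mf - g$ at deep points of $C$ and $X-C$ to force $|m|=1$), the citation of \cite[Corollary 6.5(1)]{BO} for (2), the $\cD\cS_A \cong \zz/2$ dichotomy with the sign contradiction for (3), and the Lemma~\ref{p:uniform}/Lemma~\ref{l:dim} counting argument for (4). No substantive differences to report.
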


\section{Cyclic order}\label{s:cyclic order}

In this section, we prove Proposition~\ref{t:Cyclic order}. We start with the formal definition of cyclic order and order respecting maps.

\begin{Definition}[Cyclic order]\label{d:cyclic order}
     A cyclic order on a set $X$ is a relation $\mathcal{C}\subset X^3$, written $[a,b,c]$, that satisfies the following axioms:
  \begin{itemize}
 \item Cyclicity: If $[a, b, c]\in \mathcal{C}$ then $[b, c, a]\in \mathcal{C}$.
\item Asymmetry: If $[a, b, c]\in \mathcal{C}$ then $[c, b, a]\notin \mathcal{C}$.
\item Transitivity: If $[a, b, c]\in \mathcal{C}$ and $[a, c, d]\in \mathcal{C}$ then $[a, b, d]\in \mathcal{C}$.
\item Connectedness: If $a,b,$ and $c$ are distinct, then either $[a,b,c]\in \mathcal{C}$ or $[a,c,b]\in \mathcal{C}$.
\end{itemize}
Intuitively, $[a, b, c]\in \mathcal{C}$ means ``after a, one reaches b before c".
\end{Definition} 
\begin{Definition}[Order respecting maps]
   Suppose $X$ is a set with a cyclic order $\mathcal C$. A map $f:X\rightarrow X$ is said to preserve the cyclic order if for all $a,b,c\in X$,
   \[[a,b,c]\in \mathcal C\implies [f(a),f(b),f(c)]\in \mathcal C.
   \]
   A map $f:X\to X$ is said to reverse the cyclic order if for all $a,b,c \in X$
   \[
   [a,b,c]\in \mathcal C\implies [f(c),f(b),f(a)]\in \mathcal C.
   \]
   A map $f:X\rightarrow X$ is said to respect the cyclic order $\mathcal C$ if $f$ either preserves or reverses the cyclic order of $X$.
\end{Definition}

\begin{Example}\label{e:cycle}
The circle $S^1$ has two canonical cyclic orders: clockwise and counter-clockwise. 
$[a,b,c]$ is in the clockwise cyclic order of $S^1$ iff in the clockwise direction after $a$, one reaches $b$ before $c$. 
Similarly, one can define the counter-clockwise cyclic order.
Furthermore, any orientation preserving (respectively reversing) homeomorphism $S^1\to S^1$ preserves (respectively reverses) the cyclic order.
\end{Example}

Before delving into the main proof, let us roughly describe the cyclic order of Proposition~\ref{t:Cyclic order} in the simplest case: 
when we have a quasi-isometric embedding from $T$ to $X:=\mathbb H^2$. 
This coarse embedding induces an embedding from $\partial_\infty T$ to $\partial_\infty \mathbb H^2=S^1$. 
 Picking a cyclic order on $S^1$ will then induce a cyclic order on $\partial_\infty T$.

In general, a coarse $\mathrm{PD}(n)$ space $X$ might not have a well defined boundary and even when the boundary exists, a coarse embedding from $T\times \ee^n$ to $X$ may not induce a well defined map between their boundaries. 

The key observation is that any coarse embedding of $T\times \ee^n$ into $X$ gives a cyclic order on the set of coarse complimentary components.
By coarse Alexander duality, these coarse complimentary components are in one-to-one correspondence with the set of pair of points of $\partial_\infty T$.
This correspondence induces a cyclic order on $\d_\infty T$.

We now proceed to the proof of the general case. First we recall  Proposition~\ref{t:Cyclic order} from the introduction.

\begin{Prop}\label{p:cyclic order high dimension}

 Given a coarse embedding of $Y:=T\times \ee^n$ into a coarse $\mathrm{PD}(n+2)$ space $X$, there exists a cyclic order on $\partial_\infty T$ such that for any
     $g\in \Isom(T)\times \Isom(\ee^n)$ that coarsely extends to a coarse equivalence $X\to X$, the action of $g$ on the $T$ factor respects the cyclic order on $\partial_\infty T$.
\end{Prop}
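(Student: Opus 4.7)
\smallskip
\noindent\emph{Proof plan.} I would construct the cyclic order from the coarse topology of $X$ relative to thickened bi-infinite geodesics and tripod centers in $T$, using coarse Alexander duality (Theorem~\ref{CAD}) together with Lemma~\ref{l:coarse complement for PDn}. The first step associates to each pair of distinct ends $\xi,\eta \in \partial_\infty T$ a two-sided separation of $X$: the bi-infinite geodesic $\ell_{\xi\eta} \subset T$ thickens to $L_{\xi\eta} := \ell_{\xi\eta} \times \ee^n$, which is coarsely $\ee^{n+1}$ and hence a coarse $\mathrm{PD}(n+1)$ space coarsely embedded in the coarse $\mathrm{PD}(n+2)$ space $X$. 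Lemma~\ref{l:coarse complement for PDn}(1) then yields $\Hx[1](X - L_{\xi\eta};\zz) = \zz$ together with two deep coarse complementary components $C^{\pm}_{\xi\eta}$, which are unique up to coarse equality by parts (3)--(4) of the same lemma. For any third end $\zeta$, the thickened ray $r_\zeta \times \ee^n$ toward $\zeta$ is uniformly $s$-path connected and deep, so by Lemma~\ref{l:connected and complementary comp} it lies in exactly one of $C^\pm_{\xi\eta}$, defining a sidedness function $\sigma(\xi,\eta;\zeta) \in \{+,-\}$.

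Next, I would define the cyclic order and pin down its sign conventions at tripod centers. For three distinct ends $\xi_1,\xi_2,\xi_3$ with tripod center $c\in T$, coarse Alexander duality applied to the coarse $\mathrm{PD}(n)$ subspace $\{c\}\times \ee^n \subset X$ yields
\[
\Hx[2](X - \{c\}\times \ee^n;\zz) \cong \hx[n](\ee^n;\zz) = \zz,
\]
which I interpret as a ``coarse transverse orientation'' at $\{c\}\times \ee^n$. A fixed generator of this group induces coherent $\pm$ labels on the sides of each $L_{\xi_i\xi_j}$ at $c$, and I declare $[\xi_1,\xi_2,\xi_3] \in \mathcal{C}$ to hold exactly when $\sigma(\xi_1,\xi_2;\xi_3) = \sigma(\xi_2,\xi_3;\xi_1) = \sigma(\xi_3,\xi_1;\xi_2) = +$. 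Geometrically, this says that the three thickened branches of the tripod wrap positively around $\{c\}\times \ee^n$ in the chosen transverse orientation of $X$.

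Finally I would verify the axioms and equivariance. Cyclicity and asymmetry are immediate from the definition, and connectedness follows because $\sigma$ always takes values in $\{+,-\}$. The essential technical content is \textbf{transitivity}: for four distinct ends whose convex hull in $T$ is either a segment or an $H$-shape with two tripod centers, one must show that the sidedness assignments propagate consistently between the two centers. I expect to handle this by a careful analysis of deep coarse complementary components of the various thickened geodesics, invoking Lemma~\ref{l:coarse complement for PDn}(3) to identify these components up to coarse equality and using the naturality of coarse Alexander duality to compare the transverse orientations at the two tripod centers. Equivariance is then straightforward: any $g\in \Isom(T)\times \Isom(\ee^n)$ coarsely extending to $\bar g\colon X\to X$ must act on each $\Hx[2](X - \{c\}\times \ee^n;\zz)\cong \zz$ by $\pm 1$, with a uniform global sign forced by the connectedness of $T$, so $g$ either preserves or reverses the cyclic order uniformly, i.e.\ respects it. I expect transitivity to be the main obstacle.
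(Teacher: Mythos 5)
Your overall framework (coarse Alexander duality, coarse complementary components of thickened geodesics, sidedness of thickened rays via Lemma~\ref{l:connected and complementary comp}) matches the paper's, but the step you defer is precisely the heart of the proof, and as set up it does not go through. Your sidedness function $\sigma(\xi,\eta;\zeta)$ requires a canonical labelling of the two deep components of $X-L_{\xi\eta}$, and you propose to extract this from a generator of $\Hx[2](X-\{c\}\times\ee^n;\zz)\cong\hx[n](\ee^n;\zz)=\zz$ at the tripod center. Two things are missing. First, a ``transverse orientation'' of the codimension-two subspace $\{c\}\times\ee^n$ cannot by itself label the sides of the codimension-one wall $L_{\xi\eta}$: already in the model $\rr^2$, an orientation of the plane does not single out a side of an undirected line --- you also need a direction on the wall, i.e.\ an \emph{ordering} of the pair $\{\xi,\eta\}$, and your $\sigma$ is defined on unordered pairs. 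Second, the degree-two groups at different tripod centers are a priori unrelated; comparing the ``fixed generators'' at the two centers of an $H$-shaped hull is exactly the coherence needed for transitivity, and ``naturality of coarse Alexander duality'' is not an argument here --- there is no map of pairs relating $X-\{c\}\times\ee^n$ and $X-\{c'\}\times\ee^n$ that does this for you. The same gap reappears in your equivariance step: $\bar g$ carries the group at $c$ to the group at $g(c)$, so saying it ``acts by $\pm1$'' presupposes the very identification you have not constructed.

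The paper resolves both issues with one global object: the coarse Alexander duality isomorphism $\phi:\rh[0](\partial_\infty T)\to\Hx[1](X-f(Y))$ obtained from the duality class of the \emph{whole} embedded $f(Y)$ (via $\rh[0](\partial_\infty T)\cong\hx[n+1](T\times\ee^n)$). For an \emph{ordered} pair, the integral class $\phi(c-a)$ factors through $\Hx[1](X-f((a,c)\times\ee^n))$ and, by Lemma~\ref{l:coarse complement for PDn}(1),(3), picks out a preferred side $C(c-a)$, unique up to coarse equality; the ordering of the ends is encoded in the sign of the $0$-cycle $c-a$, which is the ingredient your construction lacks. Coherence across different geodesics --- your transitivity obstacle --- is then not an orientation-comparison problem at all but follows from linearity, $\phi(b-a)+\phi(c-b)=\phi(c-a)$, which is what drives Lemma~\ref{l:alt crit} and hence cyclicity and transitivity. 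Equivariance likewise uses a single global class $\mathcal{E}\in\Hx(X\otimes X-\delta(X))\cong\zz$ realizing $\phi$ as a slant product, on which $\bar g^*$ acts by $\pm1$, giving the uniform preserve/reverse dichotomy. To repair your proposal you would essentially have to rebuild this global duality datum, so as written the construction of the order (well-definedness of the labels), transitivity, and the uniform sign in equivariance are all genuinely open in your argument.
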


We divide the proof into two parts. 
In the first part we construct the cyclic order. 
In the second part, we show that the cyclic order is respected by certain coarse equivalences.   
    For the rest of the article, the omitted coefficients for (co)homology groups will be $\mathbb Z$.
    A bi-infinite geodesic in the tree $T$ with end points $a,b\in \partial_\infty T$, will be denoted by $(a,b)$. An infinite geodesic ray with endpoints $a\in T$ and $b\in \partial_\infty T$ will be denoted by $[a,b)$.

\subsection{The cyclic order}

    Let $f:Y\rightarrow X$ be the coarse embedding.
By Theorem~\ref{CAD}, there exists a coarse Alexander duality map
\[
\phi:\hx[n+1](f(Y))\rightarrow \Hx[1](X-f(Y)).
\]
 Composing with the following compositions of isomorphisms
 \[
\rh[0](\partial_\infty T)\rightarrow  \rh[n](\partial_\infty T * \mathbb{S}^{n-1})\rightarrow \hx[n+1](T\times \ee^n)\to \hx[n+1](f(Y)) \]
we can regard $\phi$ as an isomorphism from $\rh[0](\partial_\infty T)$ to $\Hx[1](X-f(Y))$.
Let $a,c\in \partial_\infty T$. 
Then $\phi(c-a)\in \Hx[1](X-f(Y))$.
Moreover, the map $\phi$ restricted to the class $c-a\in \rh[0](\partial_\infty T)$ factors through the inclusion map $\Hx(X-f((a,c)\times \ee^n))\rightarrow \Hx(X-f(Y))$.
Therefore, we can regard $\phi(c-a)$ as a class from $\Hx[1](X- f((a,c)\times \ee^n))$.
Since $f((a,c)\times \ee^n)$ is a coarse $\mathrm{PD}(n+1)$ space and $X$ is a coarse $\mathrm{PD}(n+2)$ space, by Lemma~\ref{l:coarse complement for PDn}(1), there exists a coarse complementary component, say $C(c-a)$, of $f((a,c)\times \ee^n)$ in $X$ such that $d(1_{C(c-a)})$ represents $\phi(c-a)$.

 Note that $C(c-a)$, as a subset of $X$, is not unique. However, by Lemma~\ref{l:coarse complement for PDn}(3), $C(c-a)$ is unique up to coarse equivalence.
 By $C(c-a)$, we will mean some set from the class it represents. 

\begin{Definition}

Fix a base point $o\in T$.
For any $a,b,c\in \partial_\infty T$, we say $$[a,b,c]\in \mathcal{C} \text{ if and only if } f([o,b)\times \ee^n)\csubset C(c-a).$$
\end{Definition}

We now show that $\mathcal{C}$ is a cyclic order.
\begin{Lemma}\label{l:symmconn}
    $\mathcal{C}$ is asymmetric and connected.
\end{Lemma}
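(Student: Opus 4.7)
The plan handles the two assertions separately: asymmetry is a direct contradiction using coarse complementarity, while connectedness reduces to a dichotomy plus ruling out one ``bad'' configuration.

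For asymmetry, suppose both $[a,b,c]$ and $[c,b,a]$ lie in $\mathcal{C}$, so that $f([o,b)\times \ee^n)$ is coarsely contained in both $C(c-a)$ and $C(a-c)$. Since $\phi(a-c)=-\phi(c-a)$ in $\Hx[1](X-f(Y))$, the classes represented by $d(1_{C(a-c)})$ and by $d(1_{X\setminus C(c-a)})=-d(1_{C(c-a)})$ agree, so Lemma~\ref{l:coarse complement for PDn}(3) gives $C(a-c)\ceq X\setminus C(c-a)$. Consequently $f([o,b)\times \ee^n)$ is coarsely contained in $N_R(C(c-a))\cap N_R(X\setminus C(c-a))$ for some $R$, which by Lemma~\ref{l:complement criterion} is coarsely contained in $f((a,c)\times \ee^n)$. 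On the other hand, because $b\neq a,c$, the ray $[o,b)$ eventually leaves every bounded neighborhood of $(a,c)$ in $T$; the lower control function of the coarse embedding $f$ then ensures that $f([o,b)\times \ee^n)$ leaves every bounded neighborhood of $f((a,c)\times \ee^n)$ in $X$, a contradiction.

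For connectedness I first establish the dichotomy that $f([o,b)\times \ee^n)$ is coarsely contained in exactly one of $C(c-a)$ or $C(a-c)$. By Lemma~\ref{pdn is coarse connected} the coarse $\mathrm{PD}(n+2)$ space $X$ is uniformly $s$-path connected for some $s$, so $f([o,b)\times \ee^n)$ is $s$-path connected. Its tail escapes every $r$-neighborhood of $f((a,c)\times \ee^n)$ by the divergence argument above, and being $s$-path connected and unbounded, this tail lies in a single deep $s$-path component of $X-N_r(f((a,c)\times \ee^n))$ for each large $r$; by Lemma~\ref{l:coarse complement for PDn}(4) that $s$-path component sits inside a single deep coarse complementary component. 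The initial segment of the ray image is coarsely contained in $f((a,c)\times \ee^n)$ and hence, by Lemma~\ref{l:coarse complement for PDn}(2), coarsely contained in both components. This yields the dichotomy and so $[a,b,c]\lor[c,b,a]$; applying the same argument to the wall $(a,b)$ and the ray $[o,c)$ gives $[a,c,b]\lor[b,c,a]$.

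It remains to rule out the configuration in which both $[c,b,a]$ and $[b,c,a]$ hold, i.e.\ $f([o,b)\times \ee^n)\csubset C(a-c)$ and $f([o,c)\times \ee^n)\csubset C(a-b)$; this is the main obstacle. The plan is to exploit the algebraic identity $\phi(c-a)=\phi(c-b)+\phi(b-a)$ in $\Hx[1](X-f(Y))$ coming from the relation $(c-a)=(c-b)+(b-a)$ in $\rh[0](\partial_\infty T)$. This lifts to a $0$-cochain equation $1_{C(c-a)}=1_{C(c-b)}+1_{C(b-a)}+g+k$ with $g\in\Cx[0](X-f(Y))$ having support coarsely contained in $f(Y)$ and $k$ a constant integer. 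Combining this with the third dichotomy (for the wall $(b,c)$ and ray $[o,a)$) and evaluating at points far from $f(Y)$ in $X$, or at points on the individual arms of the tripod spanned by $a,b,c$ where one knows which component of each wall contains the point, the sign pattern assumed in the bad configuration should force an inconsistent value of $k$, yielding the desired contradiction.
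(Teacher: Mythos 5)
Your asymmetry argument is the paper's argument: you re-derive $C(a-c)\ceq X-C(c-a)$ (this is Lemma~\ref{l:alt crit}(1), proved there exactly as you do, via $\phi(a-c)=-\phi(c-a)$ and Lemma~\ref{l:coarse complement for PDn}(3)), and then get the contradiction from $f([o,b)\times\ee^n)\csubset C(c-a)\ccap C(a-c)\csubset f((a,c)\times\ee^n)$ together with the divergence of $[o,b)$ from $(a,c)$. Your first connectedness paragraph (the dichotomy: the image of the ray is coarsely contained in exactly one of $C(c-a)$, $C(a-c)$) is also essentially the paper's proof of connectedness: the paper assumes $[a,b,c]\notin\mathcal{C}$ and shows $f([o,b)\times\ee^n)\csubset X-C(c-a)\ceq C(a-c)$, i.e.\ $[c,b,a]\in\mathcal{C}$, using path-connectedness of the sub-rays and Lemma~\ref{l:connected and complementary comp}. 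The conversion of $[c,b,a]$ into $[a,c,b]$ is exactly cyclicity, which the paper establishes immediately afterwards from Lemma~\ref{l:alt crit}, whose proof uses only asymmetry and this dichotomy, so no circularity arises. Thus your final paragraph is an obligation you imposed on yourself, and it is the one step you have not actually proved: ``the sign pattern \dots should force an inconsistent value of $k$'' is a plan, not an argument, and it is not routine --- at points along an arm of the tripod you are close to two of the three walls, so the values of the corresponding indicator functions there are not determined, and what is really needed is the $S_1$, $S_2$ additivity analysis carried out in the paper's proof of Lemma~\ref{l:alt crit}(2). Either state connectedness in the dichotomy form you proved (which is all the paper proves and all that is used later), or expect to essentially reprove Lemma~\ref{l:alt crit} here.

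Two citation-level slips in the dichotomy paragraph, both easily repaired: the $s$-path connectedness of $f([o,b)\times\ee^n)$ does not follow from Lemma~\ref{pdn is coarse connected}, which is a statement about $X$ and not about images of subsets; it follows from $[o,b)\times\ee^n$ being path connected together with the upper control function of the coarse embedding $f$ (this is how the paper argues). Likewise, the fact that a deep $s$-path connected set avoiding a large neighborhood of $f((a,c)\times\ee^n)$ lies entirely in $C(c-a)$ or entirely in $X-C(c-a)$ is Lemma~\ref{l:connected and complementary comp}, not Lemma~\ref{l:coarse complement for PDn}(4). With those corrections, your first two paragraphs constitute a complete proof of the lemma as the paper proves and uses it.
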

\begin{proof}
For asymmetry, suppose $[a,b,c]$ and $[c,b,a]$ both are in $\mathcal{C}$. 
Then we have $f([o,b)\times \ee^n)\csubset C(c-a)$ and $f([o,b)\times \ee^n)\csubset C(a-c)$. 
Together, these imply $f([o,b)\times \ee^n)\csubset C(c-a)\ccap C(a-c)$.
Since $C(a-c)\ceq X- C(c-a)$ by Lemma~\ref{l:alt crit}(1) and $C(a-c)$ is a coarse complementary component of $f((a,c)\times \ee^n)$, it follows that $C(c-a)\ccap C(a-c)\csubset f((a,c)\times \ee^n)$.
 Therefore, $f([o,b)\times \ee^n)\csubset f((a,c)\times \ee^n)$. This is a contradiction since $f$ is a coarse embedding and  $[o,b)$ diverges from $(a,c)$ in $T$.

For connectedness, suppose that $[a,b,c]\notin \mathcal{C}$.
Let $A=f((a,c)\times \ee^n)$.
Since $f([o,b)\times \ee^n)$ is not coarsely contained in $C(c-a)$,  $f([o',b)\times \ee^n)$ is not a subset $C(c-a)$ for any $o'\in{} [o,b)$. 
For any $o'\in{} [0,b)$, since $[o',b)\times \ee^n$ is a path connected space and $f$ is a coarse equivalence, $f([o',b)\times \ee^n)$ is $s$-connected for some $s$.
We apply Lemma~\ref{l:connected and complementary comp} to conclude that there exists $p$ such that $f([o',b)\times \ee^n)\subset X-C(c-a)$ if $o'\in{} [o,b)$ and $d(o,o')>p$.
It follows that $f([o,b)\times \ee^n)\csubset X-C(c-a)$.
Hence $[c,b,a]\in \mathcal{C}$.
\end{proof}
In order to prove cyclicity and transitivity of  $\mathcal{C}$, we need the following lemma.
\begin{Lemma}\label{l:alt crit}
\begin{enumerate}
\item \label{1} $C(b-a)\ceq X- C(a-b)$.
    \item \label{2} $[a,b,c]\in \mathcal{C}$ iff $C(b-a)\csubset C(c-a)$ iff $C(c-b)\csubset C(c-a)$.
\end{enumerate}
    
\end{Lemma}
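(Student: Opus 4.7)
The plan is to derive both parts from the $\zz$-linearity of the coarse Alexander duality isomorphism $\phi$, combined with Lemma~\ref{l:coarse complement for PDn}.

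For part (1), I would use the relation $\phi(b-a) = -\phi(a-b)$ in $\Hx[1](X - f((a,b) \times \ee^n))$ to get $[d(1_{C(b-a)})] = [d(1_{X - C(a-b)})]$. Both $C(b-a)$ and $X - C(a-b)$ are deep coarse complementary components of $f((a,b) \times \ee^n)$ representing this same nontrivial generator of $\Hx[1](X - f((a,b) \times \ee^n)) \cong \zz$, and Lemma~\ref{l:coarse complement for PDn}(3) then yields the coarse equality.

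For part (2), I would use the additive identity $\phi(c-a) = \phi(c-b) + \phi(b-a)$ together with acyclicity of $\C[*](X)$ to write $1_{C(c-a)} = k + 1_{C(c-b)} + 1_{C(b-a)} + g$ for some constant $k \in \zz$ (coming from $\ker d$ on $\C[0](X)$) and some $g$ with support coarsely contained in $f(A \times \ee^n)$, where $A := (a,b) \cup (b,c) \cup (a,c)$ is the tripod. Outside a sufficiently large neighborhood of $f(A \times \ee^n)$, this becomes an identity between $\{0,1\}$-valued functions, and choosing points in the deep parts of $C(c-a)$ and of $X - C(c-a)$ lying outside this neighborhood forces $k \in \{-1, 0\}$.

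In the case $k = 0$, the identity shows that $C(c-b)$ and $C(b-a)$ are coarsely disjoint and coarsely partition $C(c-a)$, so $C(b-a) \csubset C(c-a)$ and $C(c-b) \csubset C(c-a)$. For $[a,b,c] \in \mathcal{C}$ in this case, Lemma~\ref{l:coarse complement for PDn}(2) gives $f((a,b) \times \ee^n) \csubset C(b-a)$, and the tree observation that $[o,b)$ coincides with a subray of $(a,b)$ outside a bounded set yields $f([o,b) \times \ee^n) \csubset f((a,b) \times \ee^n) \csubset C(b-a) \csubset C(c-a)$. In the case $k = -1$, I would apply part (1) to each indicator to reformulate the identity as the $k = 0$ case for the triple $(c,b,a)$, concluding $[c,b,a] \in \mathcal{C}$ (so $[a,b,c] \notin \mathcal{C}$ by Lemma~\ref{l:symmconn}) and likewise ruling out both coarse inclusions.

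The main obstacle is the constraint analysis forcing $k \in \{-1, 0\}$: one needs to produce, for each of the six indicator functions involved, points far from $f(A \times \ee^n)$ where the indicator takes the prescribed value. Since each coarse complementary component is deep only with respect to its own wall, extracting such points in the complement of a neighborhood of the entire tripod relies on $X$ having higher coarse dimension than each individual wall, together with the tree-geometric fact that distinct walls coarsely diverge away from the branching vertex.
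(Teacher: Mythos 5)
Part (1) of your proposal is fine, and it is exactly the paper's argument: $\phi(b-a)=-\phi(a-b)$, $-d(1_{C(a-b)})=d(1_{X-C(a-b)})$, and Lemma~\ref{l:coarse complement for PDn}(3). For part (2), however, your route (turn the additivity $\phi(c-a)=\phi(c-b)+\phi(b-a)$ into a pointwise identity of indicator functions away from the tripod and do a case analysis on the constant $k$) has two genuine gaps. First, the class identity $[d(1_{C(c-a)})]=[d(1_{C(c-b)})]+[d(1_{C(b-a)})]$ lives a priori in $\Hx[1](X-f(Y);\zz)$, so acyclicity of $\C(X)$ only produces a correction cochain $g$ with $\Supp{g}$ coarsely contained in all of $f(Y)=f(T\times\ee^n)$, not in the tripod $f(A\times\ee^n)$. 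To localize $g$ to the tripod you need the relation to hold already in $\Hx[1](X-f(A\times\ee^n);\zz)$, which requires naturality of the duality map and injectivity of $\Hx[1](X-f(A\times\ee^n))\to\Hx[1](X-f(Y))$ (dual to $\rh[0](\{a,b,c\})\to\rh[0](\partial_\infty T)$); you assert the localized form without this argument. With $g$ only controlled by $f(Y)$, your pointwise identity holds nowhere useful.

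Second, and more seriously, even granting the localization, in the $k=0$ case the identity $1_{C(c-a)}=1_{C(c-b)}+1_{C(b-a)}$ holds only outside a neighborhood $N_R$ of the tripod, so it gives $C(b-a)\csubset C(c-a)\cup N_R(f([w,b)\times\ee^n))$ (the $a$- and $c$-branches are coarsely in $f((a,c)\times\ee^n)\csubset C(c-a)$ by Lemma~\ref{l:coarse complement for PDn}(2)), but it says nothing about points near the $b$-ray, which is exactly where $g$ may be nonzero. Whether a neighborhood of the $b$-ray lies coarsely in $C(c-a)$ or in $X-C(c-a)$ is precisely the content of $[a,b,c]\in\mathcal{C}$, so the step ``the identity shows $\dots$ so $C(b-a)\csubset C(c-a)$'' (from which you then deduce $[a,b,c]\in\mathcal{C}$) is circular as written; the same issue undermines the $k=-1$ case, which you reduce to $k=0$ for $(c,b,a)$. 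The paper closes exactly this hole by introducing the components $S_1,S_2$ of the two small walls and using the connectivity machinery (Lemma~\ref{l:connected and complementary comp} together with Lemma~\ref{l:coarse complement for PDn}(4)) to place them inside $C(c-a)$ before the sign/torsion-free argument identifies $S_1\ceq C(b-a)$. Likewise, your constraint forcing $k\in\{-1,0\}$ and the exclusion of both inclusions when $k=-1$ need deep points far from the whole tripod with prescribed memberships in all the relevant sets; this again rests on the unique-deep-component statement, which you flag as the ``main obstacle'' but do not resolve. Without importing essentially the paper's $S_1,S_2$ analysis, the proof of part (2) is incomplete.
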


We first prove cyclicity and transitivity of  $\mathcal{C}$ assuming Lemma~\ref{l:alt crit} and later give a proof of Lemma~\ref{l:alt crit}.

To prove cyclicity, suppose $[a,b,c]\in \mathcal{C}$. Then by Lemma~\ref{l:alt crit}(2), $C(b-a)\csubset C(c-a)$. By Lemma~\ref{l:alt crit}(1), we have that $C(a-c)\csubset C(a-b)$.
Lemma~\ref{l:alt crit}(2) implies $[b,c,a]\in \mathcal{C}$.

 To prove transitivity, suppose $[a,b,c],[a,c,d]\in \mathcal{C}$.
 By Lemma~\ref{l:alt crit}(2), we have $C(b-a)\csubset C(c-a)$ and $C(c-a)\csubset C(d-a)$.
 Therefore, $C(b-a)\csubset C(d-a)$.
 Applying Lemma~\ref{l:alt crit}(2), we obtain $[a,b,d]\in \mathcal{C}$.

Therefore, $\mathcal{C}$ is a cyclic order. It remains to prove Lemma~\ref{l:alt crit}.

\begin{proof}[Proof of Lemma~\ref{l:alt crit}]
     To prove~\eqref{1}, first observe that $-d(1_{C(a-b)})=d(1_{X- C(a-b)})$. Moreover, $\phi(b-a)=-\phi(a-b)$ implies that $[d(1_{C(b-a)})]=-[d (1_{C(a-b)}]$. 
Together, these imply $[d(1_{C(b-a)})]=[d(1_{X- C(a-b)})]$.
By Lemma~\ref{l:coarse complement for PDn}(3), $C(b-a)\ceq X- C(a-b)$.

Next, we prove~\eqref{2}. We will show that $[a,b,c]\in \mathcal{C}$ iff $C(b-a)\csubset C(c-a)$. The proof of $[a,b,c]\in \mathcal{C}$ iff $C(c-b)\csubset C(c-a)$ will be similar.
    
    We first prove that $[a,b,c]\in \mathcal{C}$ if $C(b-a)\csubset C(c-a)$.
    Note that, $f([o,b)\times \ee^n)\csubset{} f((a,b)\times \ee^n)$.
    By Lemma~\ref{l:coarse complement for PDn}(2), we have $f((a,b)\times \ee^n)\csubset C(b-a)$. Together, these imply $f([o,b)\times \ee^n)\csubset C(b-a)$. Since $C(b-a)\csubset C(c-a)$, we obtain $f([o,b)\times \ee^n)\csubset C(c-a)$. Hence $[a,b,c]\in \mathcal{C}$.

To prove the converse, suppose that $[a,b,c]\in \mathcal{C}$ which means $f([o,b)\times \ee^n)\csubset C(c-a)$.
We want to show that $C(b-a)\csubset C(c-a)$.
To this end, let $S_1$ be the coarse complementary component of $f((a,b)\times \ee^n)$ that does not coarsely contain $f([o,c)\times \ee^n)$. This $S_1$ exists because of the asymmetry of the cyclic order proved earlier. 
To prove $C(b-a)\csubset C(c-a)$, it is enough to show that $S_1\csubset C(c-a)$ and $S_1\ceq C(b-a)$.

We first show that $S_1\csubset C(c-a)$.
By Lemma~\ref{l:coarse complement for PDn}(4), we can assume that $S_1$ is a deep $s$-path component which avoids $N_r(f(a,b)\times\ee^n)$ for any $r$.

Since $f([o,c)\times \ee^n)$ is not coarsely contained in $S_1$, by connectedness of the cyclic order we have $f([o,c)\times \ee^n)\csubset X-S_1$.
Therefore, for any given $r$, we can assume that $S_1$ misses $N_r(f((a,c)\times\ee^n))$.
By Lemma~\ref{l:connected and complementary comp}, either $S_1\subset C(c-a)$ or $S_1\subset X-C(c-a)$.
On the contrary, suppose $S_1\subset X-C(c-a)$. 
Since $f([o,b)\times \ee^n)\csubset S_1$ this would imply $f([o,b)\times \ee^n)\csubset X-C(c-a)$.
Since $f([o,b)\times \ee^n)\csubset C(c-a)$ and $C(c-a)\ccap X-C(c-a)\csubset f((a,c)\times \ee^n)$, it follows that $f([o,b)\times \ee^n)\csubset f((a,c)\times \ee^n)$. This is a contradiction, because the ray $[o,b)$ diverges from $(a,c)$ and $f$ is a coarse equivalence.
Therefore, $S_1\csubset C(c-a)$.

\begin{figure}
\begin{tikzpicture}[scale=1]
    % Axes
    \draw[->, snake it] (0,0) -- (4,0) node[right] {$f([o,b)\times \ee^n)$};
    \draw[->, snake it] (0,0) -- (0,4) node[above] {$f([o,c)\times \ee^n)$};
    \draw[->, snake it] (0,0) -- (0,-4) node[below] {$f([o,a)\times \ee^n)$};
    \node at (2, -2) {$S_1$};
     \node at (2, 2) {$S_2$};
    % Origin
    \filldraw (0,0) circle (1pt) node[below left] {$f(\{o\}\times \ee^n)$};
\end{tikzpicture}
    \label{f:coarse complements}
\end{figure}
Next, we show that $S_1\ceq C(b-a)$.
To this end, we first let $S_2$ to be the coarse complementary component of $f((b,c)\times \ee^n)$ that does not coarsely contain $f([o,a)\times \ee^n)$.
By a similar argument used to prove $S_1\csubset C(c-a)$, we can prove $S_2\csubset C(c-a)$ and henceforth, we will assume $S_2\csubset C(c-a)$.
We first claim that to prove $S_1\ceq C(b-a)$, it is enough to show that $[d(1_{S_1})]+[d(1_{S_2})]=[d(1_{C(c-a)})]$ as a class in $\Hx[1](X-f(Y))$.
 
Suppose $[d(1_{S_1})]+[d(1_{S_2})]=[d(1_{C(c-a)})]$. Since $\phi(b-a)+\phi(c-b)=\phi(c-a)$, we have $[d(1_{C(b-a)})]+[d(1_{C(c-b)})]=[d(1_{C(c-a)})]$.
Combining, we obtain
\begin{align*}\label{eq}
[d (1_{S_1})]+[d (1_{S_2})]=[d(1_{C(b-a)})]+[d(1_{C(c-b)})]. \tag{$\dagger$}
\end{align*}
Since $S_1$ and $S_2$ are coarse complementary components of $f((a,b)\times \ee^n)$ and $f((b,c)\times \ee^n)$, we know that $[d(1_{S_1})]$ and $[d(1_{S_2})]$ are the same as $[d(1_{(C(b-a))}]$ and $[d(1_{(C(c-b))}]$ respectively, up to sign by Lemma~\ref{l:coarse complement for PDn}(1).
It follows that $[d(1_{S_1})]$, $[d(1_{S_2})]$, and their sum are infinite order elements.
Therefore, $[d(1_{S_1})]=-[d(1_{C(b-a)})]$ or $[d(1_{S_2})]=-[d(1_{C(c-b)})]$ violate ~\eqref{eq}.
Therefore, we conclude
$[d(1_{S_1})]= [d(1_{C(b-a)})]$ and $[d(1_{S_2})]= [d(1_{C(c-b)})]$.
By Lemma~\ref{l:coarse complement for PDn}(3), we have $S_1\ceq C(b-a)$ and $S_2\ceq C(c-b)$.

It remains to show that $[d(1_{S_1})]+[d(1_{S_2})]=[d(1_{C(c-a)})]$.
Recall that we can assume $S_1$ to be a deep $s$-path connected component of $X-N_r(f(a,b)\times\ee^n)$ for any $r$.
Note that $S_1$ is not a subset of $S_2$ because $f([o,a)\times \ee^n)$ coarsely contained in $S_1$ but not in $S_2$ by construction. 
Therefore, by Lemma~\ref{l:connected and complementary comp}, we have $S_1\subset X-S_2$.
 Therefore from now on, we assume that $S_1\cap S_2=\emptyset$. 

Coming back to the proof of $[d(1_{S_1})]+[d(1_{S_2})]=[d(1_{C(c-a)})]$ with $S_1\cap S_2=\emptyset$ in hand, we first observe that
\[
d(1_{S_1})+d(1_{S_2})-d(1_{C(c-a)})=d(1_{S_1\cup S_2})-d(1_{C(c-a)})=d(1_{S_1\cup S_2}-1_{C(c-a)}).
\]
Note that, it suffices to prove $1_{S_1\cup S_2}-1_{C(c-a)}\in \Cx[0](X-f(Y))$. In other words, it suffices to prove that $(S_1\cup S_2)\symdiff C(c-a)\csubset f(Y)$.
More specifically, we will show that $(S_1\cup S_2)\symdiff C(c-a)\csubset A$ where $A=f(\triangle(abc)\times \ee^n)$ and $\triangle(abc)$ is union of rays $(a,b)$, $(b,c)$ and $(c,a)$ in $T$.

To show this,
let $q:\zz\rightarrow\zz/2 = \langle \mathbf{1} \rangle$ be the quotient map and let $q':\Hx(X-f(Y);\zz)\to \Hx(X-f(Y);\zz/2)$ be the induced homomorphism. 
Recall that $[d(1_{S_1})]$, $[d(1_{S_2})]$, and $[d(1_{C(c-a)})]$ are up to sign the same as $\phi(b-a)$, $\phi(c-b)$, and $\phi(c-a)$, respectively.
It follows that $[d(\mathbf{1}_{S_1})]$, $[d(\mathbf{1}_{S_2})]$, and $[d(\mathbf{1}_{C(c-a)})]$ are same as $q'(\phi(b-a))$, $q'(\phi(c-b))$, and $q'(\phi(c-a))$, respectively. 
Consequently, we have
\[
 [d(\mathbf{1}_{S_1\cup S_2})]=[d (\mathbf{1}_{S_1})]+[d(\mathbf{1}_{S_2})]=q'(\phi(b-a))+q'(\phi(c-b))=q'(\phi(c-a))=[d(\mathbf{1}_{C(c-a)})].
\]
By Lemma~\ref{l:coarse complement}, either $(S_1\cup S_2)\symdiff C(c-a)\csubset A$ or $(S_1\cup S_2)\symdiff (X-C(c-a))\csubset A$.
Since $S_1\cup S_2\csubset C(c-a)$, $(X-C(c-a))-(S_1\cup S_2)$ coarsely contains $X-C(c-a)$. 
Since $X-C(c-a)$ does not coarsely contains $A$, we conclude that $(S_1\cup S_2)\symdiff (X-C(c-a))\csubset A$ is not possible.
Hence $(S_1\cup S_2)\symdiff C(c-a)\csubset A$.
This finishes the proof.
\end{proof}

\subsection{Respecting the cyclic order}

We now show that the cyclic order $\mathcal{C}$ is respected by any map $g:T\times \ee^n\rightarrow T\times \ee^n$ that coarsely extends to a coarse equivalence $\bar{g}:X\to X$.
For convenience, the restriction of $g$ on the $T$ factor will be also denoted by $g$.

Let $\delta(X)\subset X\times X$ be the diagonal subset.
Since $X$ is a coarse $\mathrm{PD}(n)$ space, it follows that 
\[\Hx[n](X\otimes X-\delta(X))=\Hx[n](X\times X-\delta(X))=\hx[n](\delta(X))=\zz.
\]
Furthermore, since $\bar{g}:X\to X$ is a coarse equivalence, the induced map $$\bar{g}^*:\Hx[n](X\otimes X-\delta(X))\to \Hx[n](X\otimes X-\delta(X))$$ is an isomorphism $\zz\rightarrow \zz$. We claim that $g$ preserves the cyclic order if $\bar{g}^*$ is identity and reverses the cyclic order otherwise.

Let $\Phi$ be the following slant product map.
\[\begin{tikzcd}
     \Hx[n](X\otimes X-\delta(X)) \times  \hx[n+1](f(Y)) \to   \Hx[1](X-f(Y)).
\end{tikzcd}
\]
Since $\hx[n+1](f(Y))$ is isomorphic to $\rh[0](\partial_\infty T)$, we can replace $\hx[n+1](f(Y))$ by $\rh[0](\partial_\infty T)$ and consider $\Phi$ as the following map
\[\begin{tikzcd}
     \Hx[n](X\otimes X-\delta(X)) \times \rh[0](\partial_\infty T) \to   \Hx[1](X-f(Y)).
\end{tikzcd}
\]

Let $\mathcal{E}\in\Hx[n](X\otimes X-\delta(X))$ such that  $\Phi(\mathcal{E}, c-a)=\phi(c-a)$ for all $a,c\in \partial_\infty T$.
Such $\mathcal{E}$ exists due to the second part of the Theorem~\ref{CAD}.
Furthermore, we note that 
\[
\Phi(\bar{g}^*(\mathcal{E}),c-a)=\bar{g}^*(\Phi(\mathcal{E}, g(c)-g(a))).
\]

To prove the claim, suppose $\bar{g}^*$ is identity. Therefore, $\bar{g}^*(\mathcal{E})=\mathcal{E}$ and consequently, we have
$\phi(c-a)=\bar{g}^*(\phi(g(c)-g(a)))$.
It follows that $[d(1_{\bar{g}(C(c-a))})]=[d(1_{C(g(c)-g(a))})]$.
By Lemma~\ref{l:coarse complement for PDn}(3), we have $\bar{g}(C(c-a))\ceq C(g(c)-g(a))$.
Note that $f([o,b)\times \ee^n)\csubset C(c-a)$ implies $\bar{g}(f([o,b)\times \ee^n))\csubset \bar{g}(C(c-a))$.
Hence, $\bar{g}(f([o,b)\times \ee^n)) \csubset C(g(c)-g(a))$.
Since $\bar{g}(f([o,b)\times \ee^n))\ceq f([o,g(b))\times \ee^n)$, we conclude that $f([o,g(b))\times \ee^n)) \csubset C(g(c)-g(a))$.
In other words, $[g(a),g(b),g(c)]\in \mathcal{C}$.

Suppose $\bar{g}^*$ is not identity. Then $\bar{g}^*(\mathcal{E})=-\mathcal{E}$. 
Arguing as before, we obtain $-\phi(c-a)=\bar{g}^*(\phi(g(c)-g(a)))$.
It follows that $\bar{g}(C(c-a))\ceq C(g(a)-g(c))$.
Arguing as before, we can conclude that $f([o,g(b))\times \ee^n) \csubset C(g(a)-g(c))$ which implies $[g(c),g(b),g(a)]\in \mathcal{C}$.

This completes the proof of Proposition~\ref{p:cyclic order high dimension}.

 \begin{Remark}
 The only property of $\ee^n$ used in the proof of Proposition~\ref{p:cyclic order high dimension} is that it is a coarse $\mathrm{PD}(n)$ space. Therefore, Proposition~\ref{p:cyclic order high dimension} remains valid if  $\ee^n$ is replaced by any coarse $\mathrm{PD}(n)$ space.
 \end{Remark}

 \subsection{Relating the visual topology of $\partial_\infty T$ to the cyclic order}Associated to the cyclic order $\mathcal{C}$ obtained in Proposition~\ref{p:cyclic order high dimension}, we can define intervals in $\partial_\infty T$ as follows. 

 For $a,b\in \partial_\infty T$, we let 
 $(a,b)_\mathcal{C}:=\{x\mid [a,x,c]\in \mathcal{C}\}$
and $[a,b]_{\mathcal{C}}:=(a,b)_\mathcal{C}\cup \{a,b\}$.

The following two lemmas, which will be used in the next section, assert that the cyclic order $\mathcal{C}$ is, in a rough sense, compatible with the visual topology on $\partial_\infty T$.

 \begin{Lemma}\label{l:interval is open}
     $(a,b)_\mathcal{C}$ is open in $\partial_\infty T$ for any $a,b\in \partial_\infty T$.
\end{Lemma}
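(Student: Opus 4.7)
Fix $x \in (a,b)_{\mathcal{C}}$, so that $f([o,x)\times \ee^n) \csubset C(b-a)$; set $A := f((a,b)\times \ee^n)$. The plan is to produce a shadow
\[
V_v := \{x' \in \partial_\infty T : v \in [o,x')\}
\]
based at a suitably deep vertex $v \in [o,x)$ and verify $V_v \subset (a,b)_{\mathcal{C}}$. Since such shadows form a neighborhood basis at $x$ in the visual topology of $\partial_\infty T$, this suffices.

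\textbf{Step 1 (tree geometry).} Because $x \neq a,b$, the ray $[o,x)$ eventually and permanently leaves the bi-infinite geodesic $(a,b)$. For $v \in [o,x)$ past the branch point, let $T_v := \{w \in T : v \in [o,w]\} \cup \{v\}$ denote the subtree behind $v$ from $o$'s perspective. A standard tree argument shows that the nearest-point projection onto $(a,b)$ is constant on $T_v$, so $d(T_v,(a,b)) = d(v,(a,b)) \to \infty$ as $v \to x$ along $[o,x)$. Observe further that $V_v = T_v \cap \partial_\infty T$ and that $[v,x') \subset T_v$ for every $x' \in V_v$.

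\textbf{Step 2 (dichotomy and elimination).} The subspace $T_v \times \ee^n$ is geodesically convex in $Y$, so $f(T_v \times \ee^n)$ is uniformly $s$-path connected for an $s$ depending only on the upper coarse control $\rho_+$ of $f$. Let $p = p(s,C(b-a))$ be the constant from Lemma~\ref{l:connected and complementary comp}. Using the lower coarse control $\rho_-$, choose $v$ far enough along $[o,x)$ that $f(T_v \times \ee^n) \subset X - N_p(A)$. Lemma~\ref{l:connected and complementary comp} then forces $f(T_v \times \ee^n) \subset C(b-a)$ or $f(T_v \times \ee^n) \subset X - C(b-a)$. To rule out the latter, observe that $f([v,x) \times \ee^n)$ is an unbounded subset of $f(T_v \times \ee^n)$ that is coarsely contained in $C(b-a)$, being a subset of $f([o,x) \times \ee^n)$. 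If $f(T_v \times \ee^n)$ sat in $X - C(b-a)$, then $f([v,x) \times \ee^n)$ would lie in $\partial_r C(b-a) \csubset A$ for some $r$ by Lemma~\ref{l:complement criterion}, contradicting that $[v,x) \times \ee^n$ is arbitrarily far from $(a,b) \times \ee^n$ combined with the lower control of $f$. Hence $f(T_v \times \ee^n) \subset C(b-a)$.

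\textbf{Step 3 (conclusion).} Given $x' \in V_v$, decompose $[o,x') = [o,v] \cup [v,x')$. The tail lies in $T_v$, so $f([v,x') \times \ee^n) \subset C(b-a)$. For the head, $[o,v]$ is bounded in $T$, hence $f([o,v] \times \ee^n) \ceq f(\{v\} \times \ee^n) \subset C(b-a)$. Concatenating, $f([o,x') \times \ee^n) \csubset C(b-a)$, i.e., $x' \in (a,b)_{\mathcal{C}}$. The main obstacle is the elimination in Step 2: the real content is to show that the big, uniformly connected image $f(T_v \times \ee^n)$ cannot land on the ``wrong side'' of the separating set $A$. This is exactly where the coarse embedding hypothesis interacts nontrivially with the structure of deep coarse complementary components via Lemmas~\ref{l:complement criterion} and~\ref{l:connected and complementary comp}; the rest of the argument is tree bookkeeping.
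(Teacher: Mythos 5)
Your proof is correct, and it reaches the conclusion by a somewhat different route than the paper, although both arguments turn on the same engine, Lemma~\ref{l:connected and complementary comp}. The paper argues sequentially: it takes $y_m\to y$, uses fellow-traveling of the rays $[o,y_m)$ with $[o,y)$ and a thickening of $C(b-a)$ to place a point $f(\gamma_m(t_m),e)$ inside (a neighborhood of) $C(b-a)$, and then applies the dichotomy of Lemma~\ref{l:connected and complementary comp} to each ray tail $f(\gamma_m([R,\infty))\times\ee^n)$ separately. You instead exhibit an explicit basic open neighborhood: the shadow $V_v$ of a deep vertex $v\in[o,x)$, apply the dichotomy once to the image of the convex shadow subtree $T_v\times\ee^n$ (which is $s$-path connected and, for $v$ deep, avoids $N_p(A)$ by the lower control $\rho_-$), and eliminate the wrong side by noting that the tail $f([v,x)\times\ee^n)$ is coarsely in $C(b-a)$ yet escapes every neighborhood of $A$, so it cannot sit in $\partial_r C(b-a)\csubset A$ (Lemma~\ref{l:complement criterion}). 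Your packaging buys a slightly stronger and more uniform statement (an entire shadow lies in $(a,b)_\mathcal{C}$, with constants independent of $x'\in V_v$) and avoids the sequential/fellow-traveling bookkeeping and the thickening trick; the cost is the extra tree geometry (constancy of the projection to $(a,b)$ on $T_v$, convexity of $T_v$), which you correctly justify for $v$ past the branch point. One shared caveat: like the paper's proof, yours tacitly assumes $x\neq a,b$ (i.e., the cyclic order and intervals are taken on distinct points, as the axioms in Definition~\ref{d:cyclic order} intend), since otherwise $[o,x)$ need not diverge from $(a,b)$; this is not a gap relative to the paper.
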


\begin{proof}
Suppose $y\in (a,b)_\mathcal{C}$, which means $f([o,y)\times \ee^n)\csubset C(b-a)$.
    We want to show that $y$ has an open neighborhood contained in $(a,b)_\mathcal C$.
    Suppose $\{y_m\}$ is a sequence in $\partial_\infty T$ such that $\lim y_m= y$. It is enough to show that $y_m\in (a,b)_\mathcal{C}$ eventually.

Suppose $\gamma:[0,\infty)\rightarrow T$ and $\gamma_m:[0,\infty)\rightarrow T$ represent the geodesic rays $[o,y)$ and $[o,y_m)$, respectively.

    Since $f(\gamma([0,\infty))\times \ee^n)\csubset C(b-a)$ band $[d(1_{N_l(C(b-a)}))]=[d(1_{C(b-a)})]$ for any $l>0$, we can assume $f(\gamma([0,\infty))\times \ee^n)\subset C(b-a)$ by replacing $C(b-a)$ with  $N_l(C(b-a))$ for some $l$.

 Since $y_m\to y$, there exists a sequence  $\{t_m\}$ in $[0,\infty)$ such that $t_m\to \infty$, and
   \[d(\gamma(t_m),\gamma_m(t_m))\leq 1 \quad \text{eventually.}
   \]

    Fix $e\in \ee^n$.
    Since $f$ is a coarse embedding,
    there exists an $l$ such that 
    \[d(f(\gamma(t_m),e),f(\gamma_m(t_m), e))\leq l \quad  \text{eventually.}
    \]
Since $f(\gamma([0,\infty)\times \{e\})\subset C(b-a)$, we obtain that $f(\gamma_m(t_m),e)\in N_l(C(b-a))$ eventually.
 By replacing $C(b-a)$ by $N_l(C(b-a))$, without loss of generality, we can assume $f(\gamma_m(t_m), e)\in C(b-a)$ eventually.

Note that there exists $s$, such that $f(\gamma_m([r,\infty))\times \ee^n)$ is $s$-connected for all $r$.  
By Lemma~\ref{l:connected and complementary comp}, it follows that there exists $R$, such that $f(\gamma_m([R,\infty))\times \ee^n)\subset C(b-a)$ or $f(\gamma_m([R,\infty))\times \ee^n)\subset X-C(b-a)$ for all $m$.
Since $f(\gamma_m(t_m), e)\in C(b-a)$ eventually and $t_m\to \infty$, it follows that $f(\gamma_m([R,\infty))\times \ee^n)\subset C(b-a)$ eventually.
Since  $f(\gamma_m([0,\infty))\times \ee^n)\csubset f(\gamma_m([R,\infty))\times \ee^n)$, we obtain $f([o,y_m)\times \ee^n)=f(\gamma_m([0,\infty))\times \ee^n)\csubset C(b-a)$ eventually.
Hence, $y_m\in (a,b)_\mathcal{C}$ eventually.
\end{proof}

\begin{Lemma}\label{l:small intervals}
    Suppose $V\subset \d_\infty T$ be an open set and  $x\in V$. Suppose $\{x_n\}$ is a sequence in $\partial_\infty T$ that converges  to $x$. Then there exists $K$ such that for all $k\geq K$, either $[x_k,x]_\mathcal{C}\subset V$ or $[x,x_k]_\mathcal{C}\subset V$.
\end{Lemma}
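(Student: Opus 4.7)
My plan is to argue by contradiction, driven by the openness of cyclic intervals (Lemma~\ref{l:interval is open}) together with compactness of $\partial_\infty T$ (the Bass--Serre tree $T$ is locally finite, so its visual boundary is compact). Suppose the conclusion fails for some open $V\ni x$. After passing to a subsequence I can select $y_k\in(x_k,x)_\mathcal{C}\cap V^c$ and $z_k\in(x,x_k)_\mathcal{C}\cap V^c$ for every $k$, and by compactness extract further subsequential limits $y_k\to y_*$ and $z_k\to z_*$, with $y_*,z_*\in V^c$ and in particular $y_*,z_*\neq x$.

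The geometric picture I want to make rigorous is that, as $x_k\to x$, one of the two closed intervals $[x,x_k]_\mathcal{C}$, $[x_k,x]_\mathcal{C}$ must shrink to $\{x\}$ in the visual topology. For this I would do a case analysis on the cyclic ordering of $\{y_*,x,z_*\}$ (introducing an auxiliary point $a\in\partial_\infty T\smallsetminus\{x,y_*\}$ in the degenerate subcase $y_*=z_*$). Up to symmetry assume $[y_*,x,z_*]\in\mathcal{C}$, so $x\in(y_*,z_*)_\mathcal{C}$. Since this interval is open (Lemma~\ref{l:interval is open}) and $x_k\to x$, a further subsequence places $x_k$ consistently in either $(y_*,x)_\mathcal{C}$ or $(x,z_*)_\mathcal{C}$ for all large $k$; say the former. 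Then a standard Ramsey-type extraction lets me additionally assume monotonicity of the $x_k$'s in the linear order inherited from $\mathcal{C}$ on $(y_*,x)_\mathcal{C}$, realizing, for any $K<k$ in the subsequence, the cyclic arrangement $y_*,x_K,x_k,x,z_*$.

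Reading cyclic relations off this arrangement yields $y_*\in(x,x_K)_\mathcal{C}$, and a short manipulation using cyclicity, transitivity and asymmetry applied to $[x_k,x,x_K]\in\mathcal{C}$ gives the disjointness
\[
(x_k,x)_\mathcal{C}\cap(x,x_K)_\mathcal{C}=\emptyset \qquad (k>K).
\]
Fixing one such large $K$, Lemma~\ref{l:interval is open} applied to $(x,x_K)_\mathcal{C}$ combined with $y_k\to y_*\in(x,x_K)_\mathcal{C}$ gives $y_k\in(x,x_K)_\mathcal{C}$ eventually, contradicting $y_k\in(x_k,x)_\mathcal{C}$. The remaining subcases ($x_k\in(x,z_*)_\mathcal{C}$ and $y_*=z_*$) are handled by completely symmetric manipulations, with $z_k,z_*,a$ in place of $y_k,y_*,z_*$ where appropriate. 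The main obstacle will be the bookkeeping of cyclic order relations across these subcases; the genuinely new geometric content (compatibility of $\mathcal{C}$ with the visual topology) has already been packaged into Lemma~\ref{l:interval is open}, so the present lemma reduces to an essentially order-theoretic consequence of it together with compactness.
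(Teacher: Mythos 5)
Your route is genuinely different from the paper's. The paper does not deduce the lemma from Lemma~\ref{l:interval is open} at all: it goes back to the definition of $\mathcal{C}$, observes that as $x_k\to x$ the geodesic $(x_k,x)$ diverges uniformly from every ray $[o,z)$ with $z\in\partial_\infty T- V$, and concludes that all such rays $f([o,z)\times\ee^n)$ land in a single coarse complementary component of $f((x_k,x)\times\ee^n)$; hence $\partial_\infty T- V$ is contained in one of $(x_k,x)_\mathcal{C}$, $(x,x_k)_\mathcal{C}$, and taking complements gives the statement. Your proposal instead treats the lemma as a purely order--topological consequence of Lemma~\ref{l:interval is open}, the cyclic order axioms, and compactness of $\partial_\infty T$. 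That abstract route is viable (indeed, openness of all intervals plus compactness forces the interval topology of $\mathcal{C}$ to agree with the visual topology), and your main-case contradiction --- $y_*\in(x,x_K)_\mathcal{C}$ open, $y_k\to y_*$, versus $(x_k,x)_\mathcal{C}\cap(x,x_K)_\mathcal{C}=\emptyset$ for $k>K$ --- is correct. Note, however, that your argument needs $\partial_\infty T$ to be (sequentially) compact, i.e.\ $T$ locally finite; this holds for the Bass--Serre trees in the application, but the paper's proof does not use it.

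There is one concrete gap in the execution: the ``Ramsey-type extraction.'' A monotone subsequence in the linear order on $(y_*,x)_\mathcal{C}$ need not be increasing toward $x$; if it is decreasing (toward $y_*$), the arrangement $y_*,x_K,x_k,x,z_*$ you read relations off of is false, and the decreasing case is not handled by the ``completely symmetric manipulations'' you invoke, since the roles of $y_k$ and $x_k$ are not interchangeable there. The fix is short but different in kind: if $[y_*,x_k,x_K]\in\mathcal{C}$ for all $K<k$ in the subsequence, then $(x_K,y_*)_\mathcal{C}$ is an open set (Lemma~\ref{l:interval is open}) containing $x$ (since $[x_K,x,y_*]\in\mathcal{C}$) but containing no $x_k$ with $k>K$ (asymmetry applied to $[y_*,x_k,x_K]$), contradicting $x_k\to x$. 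You should also write out the degenerate case $y_*=z_*$ with the auxiliary point; it does go through, since the main-case computation only uses $y_*$ and the localizing interval, but as written it is only gestured at. With these repairs your proof is complete, at the cost of the extra compactness hypothesis the paper avoids.
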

\begin{proof}
    Consider the geodesic $(x_k,x)$ in $T$ with endpoints in $x_k$ and $x$.
    Let $f:T\times \ee^n\to X$ be the coarse embedding.
    Let $o\in T$ be a base point and $[o,z)$ denote the geodesic ray connecting $o$ and $z\in \d_\infty T$.
    For any $S$, there exists $K$, such that $d((x_k,x),[o,z))\geq S$ for all $k\geq K$ and $z\in \d_\infty T - V$.
    Since $f$ is a coarse embedding, it follows that for any $S$ there exists $K$ such that $d(f((x_k,x)\times \ee^n), f([o,z)\times \ee^n))\geq S$ for all $k\geq K$ and $z\in \d_\infty T - V$.
    It follows that, for large enough $k$, there exists a coarse complementary component $C_k$ of $f((x_k,x)\times \ee^n))$ such that  $f([o,z)\times \ee^n)\csubset C_k$ for all $z\in \d_\infty T- V$.
    Therefore, $\partial_\infty T- V\subset (x_k,x)_\mathcal{C}$ or $\partial_\infty T- V\subset (x,x_k)_\mathcal{C}$ for large enough $k$.
    Taking complements, we obtain that for large enough $k$, either $[x_k,x]_\mathcal{C}\subset V$ or $[x,x_k]_\mathcal{C}\subset V$.
\end{proof}

 \section{Incompatible group action}\label{s:incompatible group action}
 
 In this section, we prove Proposition~\ref{t:Incompatible group action}. 
Recall from the introduction that if $L$ is a finitely generated free abelian group with a chosen basis, $A\in GL(n,\mathbb{Q})$, and $L'$ a finite-index subgroup of $L\cap A^{-1} L=\zz^n\cap A^{-1}L$, then we write $G(L,A,L')$ for the HNN extension \[
G(L,A,L'):=\langle L,t\mid tct^{-1}=A(c),\forall c\in L' \rangle.
\]

 By Theorem~\ref{t:infinite index} the group $G=G(L,A,L')$ acts on $T\times \ee^n$ ($T$ is the assocated Bass-Serre tree) by isometries when the matrix $A$ is conjugate in $GL(n,\rr)$ to an orthogonal matrix.
The next lemma says that there exist elements in $G$ whose action on the tree factor $T$ are arbitrarily close to the identity map given that $A$ has infinite order.
\begin{Lemma}\label{l:action on tree}
    Suppose $A$ is conjugate in $GL(n,\rr)$ to an orthogonal matrix and has infinite order. Then there exists an element $a\in L$ with the following properties:
    \begin{itemize}
    \item There exists a point $x\in \partial_\infty T$ such that $a^kx\neq x$ for all $k\neq 0$.
        \item There exists a point $v \in T$ and an increasing sequence of natural numbers $\{n_i\}$ such that the action of $\langle a^{n_i} \rangle$ on the tree factor $T$ stabilizes the ball of radius $i$ around $v$. 

    \end{itemize}
\end{Lemma}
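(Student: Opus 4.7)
The plan is to analyze the stabilizers in $L$ of balls and of a single ray in the Bass--Serre tree $T$, then use the orthogonal-conjugate and infinite-order hypotheses on $A$ to show that the kernel of the $L$-action on $T$ has positive corank in $L$; this provides enough room to pick an $a$ satisfying both conclusions.

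First, using the HNN relation $tct^{-1}=Ac$ for $c\in L'$ and the commutativity of $L$, one computes that the stabilizer in $L$ of the vertex $t^kL$ is $\bigcap_{j=1}^{k}A^jL'$, and more generally that the $L$-stabilizer $S_k$ of the ball of radius $k$ around the basepoint $o=eL$ is
\[
S_k=\bigcap_{j=-(k-1)}^{k}A^jL'.
\]
Because $T$ is locally finite, every such ball is finite, so each $S_k$ has finite index in $L$. Set $K:=\bigcap_{k}S_k=\bigcap_{j\in\zz}A^jL'$, which is the kernel of the $L$-action on $T$. Fix a Euclidean norm on $\rr^n$ preserved by a conjugate of $A$; for any nonzero $c\in K$, the orbit $\{A^jc\}_{j\in\zz}$ lies in the discrete lattice $L'$ and on the sphere of radius $\|c\|$, so it is finite. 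Hence $c$ is periodic under $A$ and lies in $V_{\mathrm{fin}}$, the sum of the $A$-eigenspaces with root-of-unity eigenvalues. Since $A$ has infinite order, $V_{\mathrm{fin}}$ is a proper subspace of $\rr^n$, so $\operatorname{rank}(K)<n$ and $L/K$ has positive rank.

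Now choose $a\in L$ whose image in $L/K$ has infinite order, which is possible since $L/K$ has positive rank. For the second conclusion, take $v:=o$ and let $n_i$ be the order of the image of $a$ in the finite group $L/S_i$; the sequence $\{n_i\}$ is non-decreasing, and if it were bounded by some $N$ then $a^N$ would lie in $\bigcap_i S_i=K$, contradicting the choice of $a$. Therefore $n_i\to\infty$, and by construction $a^{n_i}\in S_i$ fixes the ball $B_i(o)$ pointwise. For the first conclusion, take $x\in\partial_\infty T$ to be the endpoint of the ray $o\to tL\to t^2L\to\cdots$, whose $L$-stabilizer is $\bigcap_{j\geq 1}A^jL'$. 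The same orbit-and-periodicity argument shows $\bigcap_{j\geq 1}A^jL'=K$: any nonzero element has a bounded, discrete one-sided orbit in $L'$, so this orbit is finite, and periodicity then makes it two-sided. Consequently $a^kx=x$ iff $a^k\in K$, which happens only for $k=0$.

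The key technical point is the rank bound on $K$ in the second step, where the orthogonal-conjugate hypothesis supplies the boundedness of $A$-orbits and the infinite-order hypothesis makes $V_{\mathrm{fin}}$ a proper subspace; everything else reduces to elementary bookkeeping about the finite quotients $L/S_k$.
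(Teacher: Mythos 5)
Your argument is correct, and it follows the same overall skeleton as the paper (pick $a\in L$ acting elliptically but with infinite order modulo the kernel of the action on $T$; use local finiteness of $T$ and finiteness of balls for the second bullet; use an end of the $t$-axis for the first bullet), but you handle the key input differently. The paper simply cites the proof of Leary--Minasyan's Theorem 7.5 for the statement that the kernel $c(L)$ of the action on $T$ has infinite index in $L$ when $A$ has infinite order, and then, since it only knows the two-sided axis stabilizer, it needs the small trick of taking both ends $x,x'$ of the axis $\{t^iv\}$ and arguing that at least one of them satisfies $a^kx\neq x$ for all $k\neq 0$. You instead reprove that input from scratch: using an $A$-invariant Euclidean norm (available since $A$ is conjugate to an orthogonal matrix) together with discreteness of $L'$, you show every element of $K=\bigcap_{j\in\zz}A^jL'$ is $A$-periodic, hence lies in the proper subspace spanned by root-of-unity eigenvectors, giving the stronger conclusion $\operatorname{rank}(K)<n$; and the same periodicity argument shows the one-sided stabilizer $\bigcap_{j\geq 1}A^jL'$ already equals $K$, so a single specified end $x$ works, with $a^kx=x$ forced only for $k=0$ (here you implicitly use the standard fact that an isometry fixing the vertex $o$ and the end $x$ fixes the ray $[o,x)$ pointwise, which is fine). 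What each approach buys: the paper's proof is shorter by black-boxing \cite{Leary-Minasyan}, while yours is self-contained, identifies the kernel explicitly as $\bigcap_{j\in\zz}A^jL'$, and yields a sharper statement (a rank bound on the kernel and a canonical choice of $x$). Two cosmetic remarks: your explicit formula $S_k=\bigcap_{j=-(k-1)}^{k}A^jL'$ is correct but not needed --- all you use is that $S_k$ has finite index and $\bigcap_kS_k=K$; and note that your proof gives pointwise fixing of the ball $B_i(v)$, which is exactly the (stronger) reading of ``stabilizes'' that the later application in Proposition~\ref{p:incompatible action} requires.
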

\begin{proof}
Note that since $L'$ and $A(L')$ are finite index in $L$, the Bass-Serre tree $T$ is locally finite. 
The stabilizer of the vertices of $T$ are conjugates of the vertex group $L$.  
Therefore, the subgroup that fixes the entire tree is the intersection of all conjugates of $L$, denoted by $c(L)$.
It follows from the proof of~\cite[Theorem 7.5]{Leary-Minasyan} that if $A$ has infinite order, then $|L:c(L)|$ is infinite. 
Since $L$ is a  finitely generated abelian group, there is an element $a\in L$ that represents an infinite order element in the quotient $L/c(L)$.
Consequently, $a$ represents an infinite order element in $\Isom{(T)}$ that stabilizes a vertex.

Take a vertex $v\in T$ such that $\Stab(v)=L$.
Let $t\in G(A,L')$ be the stable letter.

For the first claim, we let $x,x'\in \partial _\infty T$ be the end points of the geodesic with vertices $\{\ldots, t^{-2}v,t^{-1}v,v,tv,t^2v,\ldots\}$.
We claim that $a^kx\neq x$ for all $k\neq 0$ or $a^kx'\neq x'$ for all $k\neq 0$.
If not, then some $a^k$ fixes both  $x$ and $x'$, and since it fixes $v$ it fixes this geodesic pointwise. 
Since $\Stab(v)=L$, we have $\Stab(t^iv)=t^iLt^{-i}$.
It follows that $a^k\in t^iLt^{-i}$ for all $i$ and therefore $a^k\in c(L)$. This is a contradiction. Hence, the claim is true.

For the second claim, note that $\langle a \rangle$ stabilizes all the balls around $v$.
It follows that for each $i$, there exists $n_i\in \nn$, such that $\langle a^{n_i} \rangle$ acts trivially on the ball of radius $i$ around $v$ in $T$.
The claim follows.
\end{proof}

We now proceed to prove Proposition~\ref{t:Incompatible group action}. First, we recall the statement.
\begin{Prop}\label{p:incompatible action}
Suppose that $G=G(A,L')$ where $A$ has infinite order and is conjugate
in $GL(n,\rr)$ to an orthogonal matrix. If $\Gamma$ is a finite index subgroup of $G(A,L')$, then the action of $\Gamma$ on $T$ does not respect the cyclic order $\mathcal{C}$.
\end{Prop}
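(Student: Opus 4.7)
The plan is to follow the Kapovich--Kleiner strategy for $BS(p,q)$ adapted to the Leary--Minasyan setting: assume $\Gamma$ respects $\mathcal{C}$, produce an incompatible element via Lemma~\ref{l:action on tree}, and combine it with the compatibility between $\mathcal{C}$ and the visual topology to force a contradiction. So suppose a finite-index subgroup $\Gamma \leq G(A,L')$ acts respecting $\mathcal{C}$. The elements of $\Gamma$ that preserve (rather than reverse) $\mathcal{C}$ form a subgroup of index at most two, so after replacing $\Gamma$ by it we may assume every $\gamma \in \Gamma$ preserves $\mathcal{C}$. Lemma~\ref{l:action on tree} then supplies an element $a \in L$ fixing a vertex $v \in T$, a point $x \in \partial_\infty T$ with $a^k x \neq x$ for every $k \neq 0$, and a sequence $n_i \uparrow \infty$ such that each $a^{n_i}$ acts trivially on the ball of radius $i$ around $v$. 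Because $\langle a \rangle \cap \Gamma$ has finite index in $\langle a \rangle$, after replacing $a$ by a positive power (and the $n_i$ by a suitable subsequence) we may further assume $a \in \Gamma$.

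Two geometric facts will then be in play. First, since $a^{n_i}$ fixes the ball of radius $i$ around $v$ pointwise, the rays $[v,x)$ and $[v, a^{n_i}x)$ share their first $i$ edges, so $a^{n_i} x \to x$ in the visual topology of $\partial_\infty T$. Lemma~\ref{l:small intervals}, together with passing to a subsequence, then yields that one of the cyclic intervals $[x, a^{n_i}x]_{\mathcal{C}}$ or $[a^{n_i}x, x]_{\mathcal{C}}$ is contained in arbitrarily small visual neighborhoods of $x$. Second, because $a$ preserves $\mathcal{C}$ and the orbit $\{a^k x\}_{k \in \mathbb{Z}}$ is infinite, the restriction of $\mathcal{C}$ to this orbit is a monotone cyclic ordering: the orbit embeds order-preservingly into $S^1$ with $a$ acting as a nontrivial rotation.

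To close the argument, the plan is to show that $\mathcal{C}$ descends, via the shadow partition $\partial_\infty T = \bigsqcup_{e} U_e$ (where $e$ ranges over edges at $v$ and $U_e$ is the clopen set of ends whose ray from $v$ starts with $e$), to a cyclic order on the finite set of edges at $v$ preserved by $\Stab(v) = L$. One then analyzes the combinatorics of an abelian group acting by cyclic permutations on $[L:L']+[L:L'']$ objects partitioned into two type classes (cosets of $L'$ and of $L''$) in a type-preserving manner: such an action must be trivial unless the interleaving pattern of types is highly symmetric, and the hypothesis that $A$ has infinite order precludes this symmetry at some level of the tree. This forces $a$ to act trivially on the edges near $v$, contradicting the nontriviality guaranteed by Lemma~\ref{l:action on tree}. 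The hardest step, and the main obstacle, is the descent of $\mathcal{C}$ to the edges at $v$: because $\mathcal{C}$ is defined abstractly through coarse complementary components and not through a planar embedding of $T$, one must carefully exploit the visual-topological compatibility of Lemmas~\ref{l:interval is open} and~\ref{l:small intervals} to verify that each shadow $U_e$ is a $\mathcal{C}$-interval, so that the combinatorial Kapovich--Kleiner argument can be run.
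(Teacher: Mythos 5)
Your setup (extracting $a$ from Lemma~\ref{l:action on tree}, replacing it by a power lying in $\Gamma$ and preserving $\mathcal{C}$, noting $a^{n_i}x\to x$ in the visual topology, and invoking Lemma~\ref{l:small intervals}) coincides with the paper's, but the endgame you propose has two genuine gaps. First, the descent of $\mathcal{C}$ to a cyclic order on the edges at $v$ requires that each shadow $U_e$ be a $\mathcal{C}$-interval. You correctly identify this as the main obstacle, but you do not prove it, and Lemmas~\ref{l:interval is open} and~\ref{l:small intervals} do not give it: they only control intervals whose endpoints are boundary points converging to a common point, whereas $\mathcal{C}$ is defined through coarse complementary components of sets $f((a,c)\times\ee^n)$ and nothing established in the paper identifies a shadow with such an interval. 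So the reduction to a finite combinatorial problem at the vertex $v$ is unproved.

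Second, even granting that descent, the combinatorial conclusion you aim for does not hold. A type-preserving, order-preserving action of the abelian group $L=\Stab(v)$ on a finite cyclically ordered set of edges is a rotation, and rotations need not be trivial; moreover, since $A$ is conjugate to an orthogonal matrix, $|\det A|=1$, so $[L:L']=[L:L'']$ and the two edge types can interleave completely symmetrically --- there is no local asymmetry to exploit, in contrast with the $|p|\neq|q|$ Baumslag--Solitar situation. The contradiction you invoke, namely that ``$a$ acts trivially on the edges near $v$'' contradicts Lemma~\ref{l:action on tree}, is not a contradiction: that lemma only guarantees $a^kx\neq x$ for some end $x$ (nontriviality somewhere on $T$), and indeed high powers of $a$ act trivially on arbitrarily large balls around $v$; triviality on the star of $v$ is consistent with it, and upgrading to $a\in c(L)$ would require running your (unproved) local claim inductively at every vertex of the fixed subtree. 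The paper's proof avoids all local analysis: assuming $\langle\beta\rangle$ preserves $\mathcal{C}$, it chooses $V$ whose $\langle\beta^{k}\rangle$-translates fail to cover $\partial_\infty T$, picks $z$ outside $Y_k=\bigcup_i[\beta^{ik}x,\beta^{(i+1)k}x]_\mathcal{C}$, and uses transitivity and asymmetry to trap the orbit $\{\beta^{ik}x\}_{i\geq 2}$ outside the open interval $(z,\beta^{k}x)_\mathcal{C}$ containing $x$ (open by Lemma~\ref{l:interval is open}), contradicting the recurrence $\beta^{kn_i}x\to x$ supplied by Lemma~\ref{l:action on tree}. To repair your proposal you would need either to supply this dynamical trapping argument, or to actually prove both the shadow-interval claim and a correct global, multi-scale combinatorial obstruction; as written, the argument does not close.
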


\begin{proof}

Let $a\in G(A,L')$ be as in Lemma~\ref{l:action on tree}.
Since $\Gamma$ is a finite index subgroup, $a^l\in \Gamma$ for some $l\in \nn$.
We let $\beta:=a^l$.
We aim to show that $\beta$ does not respect $\mathcal{C}$.
On the contrary, suppose each element in $\langle \beta\rangle$ respects $\mathcal{C}$.
Taking the square of $\beta$ if necessary, we can assume that $\langle\beta\rangle$ preserves the cyclic order.
Using Lemma~\ref{l:action on tree}, we pick $x\in \partial_\infty T$ such that  $\beta^kx\neq x$ for any $k\neq 0$.

By Lemma~\ref{l:action on tree}, we can choose an open set $V$ containing $x$ so that the union of $\langle \beta^k\rangle$ translates of $V$ does not cover $\d_\infty T$ for large enough $k$.
By Lemma~\ref{l:action on tree}, some subsequence of $\{\beta^{n_k}x\}_{k\in \nn}$ converges to $x$.
By Lemma~\ref{l:small intervals}, we have $[\beta^{n_k}x,x]_\mathcal{C}\subset V$ or $[x,\beta^{n_k}x]_\mathcal{C}\subset V$ for large enough $k$.
In other words, either $[x,\beta^kx]_\mathcal{C}\subset V$ or $[\beta^{k}x,x]_\mathcal{C}\subset V$ for some arbitrarily large $k$.

 We first consider the case where  $[x,\beta^kx]_\mathcal{C}\subset V$ for some arbitrarily large $k$.
We consider the  following set \[Y_k:=\cup_{i=0}^\infty [\beta^{ik}x,\beta^{(i+1)k}x]_\mathcal{C}.
\]
Since $\langle \beta \rangle$ preserves the cyclic order, $Y_k$ is the union of the $\langle \beta^k\rangle$-translates of $[x,\beta^k x]_\mathcal{C}$.
Since $\langle \beta^k\rangle$-translates of $V$ does not cover $\d_\infty T$ for large enough $k$,
it follows that $Y_k\neq \partial_\infty T$ for some large $k$.

We take $z\in \partial_\infty T- Y_k$ which means $[\beta^{ik}x,\beta^{(i+1)k}x,z]\in \mathcal{C}$ for all $i\in \nn\cup \{0\}$.
By transitivity of the cyclic order, we have $[\beta^{k}x,\beta^{ik}x,z]\in \mathcal{C}$ for all $i\geq 2$. In other words, $\beta^{ik}x\in{} [\beta^kx,z]_\mathcal{C}$ for all $i\geq 2$.
By asymmetry of the cyclic order, we have $\beta^{ik}x\notin{} (z,\beta^k x)_\mathcal{C}$ for all $i\geq 2$.
Additionally, since $[x,\beta^kx,z]\in \mathcal{C}$, we have $x\in (z,\beta^k x)_\mathcal{C}$ by cyclicity of $\mathcal{C}$.
By Lemma~\ref{l:interval is open}, the interval $(z,\beta^k x)_\mathcal{C}$ is an open set.
It follows that the sequence $\{\beta^{ik}x\}_{i\in \nn}$ does not have any subsequence that converges to $x$.
Since Lemma~\ref{l:action on tree} implies that there is an increasing sequence $\{n_i\}$  such that $\beta^{kn_i}x\rightarrow x$ as $i\rightarrow \infty$, we arrive at a contradiction.

So we are left with the case where $[\beta^{k}x,x]_\mathcal{C}\subset V$ for some arbitrarily large $k$.
Applying $\beta^{-k}$, we get $[x, \beta^{-k}x]_\mathcal{C}\subset \beta^{-k}V$ for some large $k$.
Since the $\langle \beta^k \rangle$ translate of $V$ do not cover $\d_\infty T$ for large $k$, it follows that $Y_{-k}\neq \d_\infty T$ for some large $k$.
We can now apply the same argument as before, replacing $k$ by $-k$, to get a contradiction.
\end{proof}

This completes the proof of Theorem~\ref{t:main theorem}.

\bibliography{bibliography.bib}
\bibliographystyle{amsalpha} 
\end{document}